\numberwithin{equation}{section}
\theoremstyle{plain}
\newtheorem{thm}{Theorem}[section]
\newtheorem{conjecture}[thm]{Conjecture}
\newtheorem{prop}[thm]{Proposition}
\newtheorem{cor}[thm]{Corollary}
\theoremstyle{definition}
\newtheorem{defn}[thm]{Definition}
\theoremstyle{remark}
\newtheorem{rem}[thm]{Remark}
\newtheorem{notation}[thm]{Notation}
\newcommand\Cb{\mathbb{C}}
\newcommand\N{\mathbb{N}}
\newcommand\Q{\mathbb{Q}}
\newcommand\oQ{\overline{\mathbb{Q}}}
\newcommand\R{\mathbb{R}}
\newcommand\Z{\mathbb{Z}}
\newcommand\Oc{\mathcal{O}}
\newcommand\wht{h}
\DeclareMathOperator\ld{lin.d.}
\DeclareMathOperator\td{tr.deg.}
\title{Polynomial exponential equations and Zilber's conjecture}
\author{V.\ Mantova}
\address{Scuola Normale Superiore, Piazza dei Cavalieri 7, 56126 Pisa, Italy}
\email{vincenzo.mantova@sns.it}
\date{24th November 2015}
\subjclass[2010]{11D61, 03C60}
\begin{document}

\maketitle

\begin{center}
  With an Appendix by V.\ Mantova and U.\ Zannier
\end{center}

\begin{abstract}
  Assuming Schanuel's conjecture, we prove that any polynomial exponential
  equation in one variable must have a solution that is transcendental over a
  given finitely generated field. With the help of some recent results in
  Diophantine geometry, we obtain the result by proving (unconditionally) that
  certain polynomial exponential equations have only finitely many rational
  solutions.

  This answers affirmatively a question of David Marker, who asked, and proved
  in the case of algebraic coefficients, whether at least the one-variable case
  of Zilber's strong exponential-algebraic closedness conjecture can be reduced
  to Schanuel's conjecture.
\end{abstract}

\section{The problem}

Based on model-theoretic arguments, Zilber conjectured in \cite{Zilber2005} that
the complex exponential function satisfies two strong properties about its
algebraic behaviour. One is the long standing Schanuel's Conjecture, today
considered out of reach, while the other, called ``Strong Exponential-algebraic
Closedness'' or ``Strong Exponential Closedness'', states that all systems of
polynomial-exponential equations compatible with Schanuel's conjecture have
solutions of maximal transcendence degree. Zilber proved that this would imply
that the complex exponential function has a good algebraic description in a very
strong model-theoretic sense (most importantly, the structure would be
axiomatizable with an uncountably categorical sentence).

While Schanuel's Conjecture is currently considered out of reach, except for
some very special known instances, the second property is still relatively
unexplored. Marker observed in \cite{Marker2006} that, at least in some cases,
the second property would already follow from Schanuel's Conjecture. This
suggests investigating whether the second property is actually a consequence of
Schanuel's Conjecture, thereby implying that Schanuel's and Zilber's conjectures
are equivalent.

In the case of systems in one variable the question of Zilber has a particularly
simple shape. A system of equations in one variable compatible with Schanuel's
Conjecture is just an equation
\[
  p(z,\exp(z))=0
\]
where $p(x,y)\in\Cb[x,y]$ is an irreducible polynomial where both $x$ and $y$
appear, namely such that
$\frac{\partial p}{\partial x},\frac{\partial p}{\partial y}\neq0$.

Zilber's Strong Exponential Closedness for one variable asserts the following:

\begin{conjecture}[\cite{Zilber2005}]
  \label{conj:main}For any finitely generated field $k\subset\Cb$ and for any
  irreducible $p(x,y)\in k[x,y]$ such that
  $\frac{\partial p}{\partial x},\frac{\partial p}{\partial y}\neq0$, there
  exists $z\in\Cb$ such that $p(z,\exp(z))=0$ and $\td_{k}(z,\exp(z))=1$.
\end{conjecture}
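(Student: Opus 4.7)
The plan is to combine complex analysis with Schanuel's Conjecture and an unconditional Diophantine finiteness statement about ``rational'' solutions of $p(z,\exp(z)) = 0$.

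\emph{Step 1: $f(z) := p(z,\exp(z))$ has infinitely many zeros in $\Cb$.} Being entire of order $\le 1$, if $f$ had only finitely many zeros then by Hadamard factorization $f(z) = Q(z) e^{az + b}$ for some polynomial $Q$ and constants $a, b \in \Cb$. Using the $\Cb(z)$-linear independence of the exponentials $\{e^{jz}\}_{j \in \Z}$, one checks that $a \in \N$ and that all but one coefficient $p_j(x)$ of $p(x,y) = \sum p_j(x) y^j$ must vanish, forcing $p$ to be a monomial in $y$ and contradicting either the irreducibility of $p$ or the hypothesis that $\partial p/\partial x, \partial p/\partial y \neq 0$.

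\emph{Step 2: Schanuel forces algebraic solutions into a finite-dimensional $\Q$-subspace.} Assume for contradiction that every solution $z$ of $f(z) = 0$ has $\td_k(z, \exp(z)) = 0$, so that $z$ is algebraic over $k$ (and, since $\partial p/\partial y \neq 0$, so is $\exp(z)$). Let $S$ denote this set of algebraic solutions; by Step~1, $S$ is infinite. If $z_1, \ldots, z_n \in S$ are $\Q$-linearly independent, Schanuel's Conjecture yields
\[
  \td_\Q(z_1, \ldots, z_n, e^{z_1}, \ldots, e^{z_n}) \ge n,
\]
but all these values lie in $\overline{k}$, which has transcendence degree $d := \td_\Q(k) < \infty$ over $\Q$. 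Hence $n \le d$, and therefore $S$ is contained in a single $\Q$-vector subspace $V \subset \Cb$ of dimension at most $d$.

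\emph{Step 3: unconditional Diophantine finiteness.} It remains to prove, \emph{without} assuming Schanuel, the following: for every finite-dimensional $\Q$-vector subspace $V \subset \Cb$, the set $\{z \in V : p(z, \exp(z)) = 0\}$ is finite. This directly contradicts the infinitude of $S$ from Step~2 and completes the proof.

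The main obstacle is clearly Step~3. It is a Mordell--Lang-flavoured finiteness assertion in the mixed additive-multiplicative setting: one wishes to show that the transcendental image $\{(z, \exp(z)) : z \in V\}$ meets the algebraic curve $\{p = 0\} \subset \Cb \times \Cb^{*}$ in only finitely many points. I expect it to rest on the recent advances in Diophantine geometry alluded to in the abstract---heights, o-minimality, and Pila--Zannier-style counting---in the spirit of Bombieri--Masser--Zannier's work on unlikely intersections in products of $\mathbb{G}_a$ and $\mathbb{G}_m$, and it is the content that the Appendix with Zannier is presumably meant to supply.
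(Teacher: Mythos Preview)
Your three-step outline is exactly the paper's architecture: infinitude of zeros of $p(z,\exp(z))$ (your Step~1 is what the paper cites from \cite{Marker2006}), the Schanuel bound forcing the algebraic solutions into a finite-dimensional $\Q$-space (your Step~2 is the paper's Proposition~2.1/Corollary~2.2), and finiteness of solutions in such a space (your Step~3 is precisely Theorem~1.3). Steps~1 and~2 are argued correctly.

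The gap is that you do not prove Step~3, and Step~3 is the entire substance of the paper, not a side-lemma that the appendix supplies. Moreover, your guess at the toolkit is wrong: the argument uses neither o-minimality nor Pila--Zannier counting, nor Bombieri--Masser--Zannier-type unlikely-intersection results. What the paper actually does is: (i) apply G\"unaydin's theorem that rational solutions of such exponential sums have bounded denominators, reducing to \emph{integer} solutions $\mathbf{n}\in\Z^l$; (ii) apply Zannier's function-field analogue of Laurent's theorem on exponential polynomial equations to reduce to the case $\exp(\mathbf{c})\subset\oQ^{*}$; (iii) finish with a specialisation argument and standard logarithmic Weil height inequalities on $\oQ$, where the key point is controlling the period $2\pi i$. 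For (iii) the main text uses Schanuel's Conjecture once more to guarantee that $2\pi i$ is algebraically independent from the other basis entries, and the appendix replaces that single use by Baker's theorem on linear forms in logarithms. So the unconditional Step~3 rests on classical Diophantine inputs (G\"unaydin, Laurent/Zannier, Baker, heights) rather than on the o-minimal counting circle of ideas you anticipated.
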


It is well known and not too difficult to prove that the equation
$p(z,\exp(z))=0$ has infinitely many solutions (see \cite{Marker2006}).  Marker
proved that Schanuel's Conjecture implies \prettyref{conj:main} when
$p\in\oQ[x,y]$, and he asked whether the same holds for any $p\in\Cb[x,y]$
\cite{Marker2006}. Günaydin suggested a different technique in
\cite{Gunaydin2011} and provided some steps towards this
generalisation. Starting from \cite{Gunaydin2011}, we apply a Diophantine result
about function fields and exponential equations from \cite{Zannier2004} and
ultimately give a positive answer.

\begin{thm}
  \label{thm:main}If Schanuel's Conjecture holds, then \prettyref{conj:main} is
  true.
\end{thm}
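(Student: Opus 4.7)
The plan is to argue by contradiction: suppose no $z\in\Cb$ with $p(z,\exp z)=0$ satisfies $\td_k(z,\exp z)=1$. Since $\exp z$ is automatically algebraic over $k(z)$ by the equation, this is equivalent to assuming that every solution $z$ lies in $\overline k$. We aim to contradict the well-known infinitude of the solution set $S:=\{z\in\Cb:p(z,\exp z)=0\}$ recalled just before \prettyref{conj:main}.

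Schanuel's Conjecture is first used to bound the $\Q$-linear complexity of $S$. For any $\Q$-linearly independent $z_1,\dots,z_n\in S$, all of $z_1,\dots,z_n,\exp z_1,\dots,\exp z_n$ lie in $\overline k$, so Schanuel yields
\[
  n\leq\td_\Q(z_1,\dots,z_n,\exp z_1,\dots,\exp z_n)\leq\td_\Q(\overline k)=\td_\Q(k)=:m.
\]
Hence $S$ is contained in a $\Q$-vector subspace $V\subseteq\overline k$ of dimension $r\leq m$. Fix a $\Q$-basis $v_1,\dots,v_r$ of $V$, set $\beta_j:=\exp v_j\in\overline k$, and write each solution uniquely as $z=\sum_j q_j v_j$ with $q_j\in\Q$; then $\exp z=\prod_j\beta_j^{q_j}$, where $\beta_j^{q_j}:=\exp(q_jv_j)$. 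Consequently the elements of $S$ correspond bijectively to the rational solutions of the polynomial-exponential equation
\[
  F(q_1,\dots,q_r)\;:=\;p\Bigl(\sum_{j}q_jv_j,\;\prod_{j}\beta_j^{q_j}\Bigr)\;=\;0, \qquad (q_1,\dots,q_r)\in\Q^r.
\]

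The heart of the proof is then the \emph{unconditional} statement that this equation has only finitely many rational solutions. Because $v_1,\dots,v_r,\beta_1,\dots,\beta_r$ all live in the finitely generated field $\overline k$, after choosing a transcendence basis of $k$ over $\Q$ one can read $F=0$ as a polynomial-exponential equation over a function field, in the sense studied by Zannier \cite{Zannier2004}; the corresponding finiteness theorem, applied in the appendix, yields that $F=0$ has only finitely many rational tuples $(q_j)$. This forces $S$ to be finite, contradicting its known infinitude and completing the proof.

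The main obstacle is clearly the last step---the application of Zannier's function-field Diophantine theorem to the specific equation $F=0$. Two issues stand out. First, one must rule out degenerations in which $F$ could vanish identically on a positive-dimensional family of rational $(q_j)$; here the irreducibility of $p$ and the non-degeneracy hypotheses $\partial p/\partial x,\partial p/\partial y\neq 0$ are needed to ensure that $F$ depends genuinely on all the $q_j$ and does not collapse under a multiplicative relation among the $\beta_j$. Second, the rational exponents $q_j$ can have unbounded denominators, so Zannier's theorem must be applied in a form that handles all rationals at once rather than lattice-by-lattice: a naive reduction to fixed denominator $D$ would only give finiteness of each slice $S\cap D^{-1}(\Z v_1+\cdots+\Z v_r)$ without controlling $D$. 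Verifying the precise hypotheses of the function-field theorem for the equation $F=0$, and so controlling denominators uniformly, is exactly the technical content of the joint appendix with Zannier.
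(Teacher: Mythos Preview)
Your overall architecture matches the paper's exactly: reduce to showing that the solutions in $\overline{k}$ are finite, use Schanuel to trap them in a finite-dimensional $\Q$-space, and then invoke \prettyref{thm:finite} to conclude. The contradiction framing is logically equivalent to the paper's direct finiteness statement.

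Where your write-up goes off is in the account of \emph{how} \prettyref{thm:finite} is proved, and this matters because you explicitly flag the two obstacles and then misattribute their resolution. The denominator problem is \emph{not} handled by Zannier's function-field theorem nor by the appendix; it is handled by G\"unaydin's result (\prettyref{thm:gunaydin}, used in \prettyref{prop:red-to-int}), which is a separate and prior step. Zannier's theorem (\prettyref{thm:zannier}) is then applied to the resulting \emph{integer} equation, and it does not yield finiteness: it only reduces to the case $\exp(\mathbf{c})\subset\oQ^{*}$ (\prettyref{prop:red-to-alg}). The actual finiteness comes from a height argument, which in the body of the paper (\prettyref{prop:finite-alg}) uses Schanuel once more to deal with $2\pi i$, and in the appendix is made unconditional via Baker's theorem on logarithms rather than anything of Zannier's. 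So your sentence ``Zannier's theorem must be applied in a form that handles all rationals at once'' describes a theorem that does not exist in the paper; the three ingredients (G\"unaydin, Zannier, heights with Schanuel/Baker) are genuinely distinct.

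A minor slip: $\overline{k}$ is not finitely generated; what you need (and what the paper uses) is that it has finite transcendence degree over $\Q$.
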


In fact, using Schanuel's conjecture it is easy to reduce \prettyref{conj:main}
to the problem of counting rational solutions of certain polynomial-exponential
equations. Here we obtain the finiteness of such solutions.
\begin{thm}
  \label{thm:finite}Let $p(x,y)\in\Cb[x,y]$ be an irreducible polynomial such
  that $\frac{\partial p}{\partial x}\neq0$,
  $\frac{\partial p}{\partial y}\neq0$, and let ${\bf b}\in\Cb^{l}$ be a vector
  of $\Q$-linearly independent complex numbers. Then the equation
  \[
    p({\bf x}\cdot{\bf b},\exp({\bf x}\cdot{\bf b}))=0
  \]
  has only finitely many solutions ${\bf x}\in\Q^{l}$.
\end{thm}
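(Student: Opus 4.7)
The approach is to argue by contradiction using the Diophantine finiteness result of \cite{Zannier2004} on zeros of polynomial-exponential expressions over function fields.

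Suppose that infinitely many rational solutions $\mathbf{x}^{(n)} \in \Q^{l}$ exist. Writing $\mathbf{x}^{(n)} = \mathbf{m}^{(n)}/N_{n}$ with $\mathbf{m}^{(n)} \in \Z^{l}$ and $N_{n} \in \N$, and setting $\alpha_{k,N} := \exp(b_{k}/N) \in \Cb^{*}$, the equation $p(\mathbf{x}\cdot\mathbf{b}, \exp(\mathbf{x}\cdot\mathbf{b})) = 0$ becomes, after expanding $p(z,y) = \sum c_{ij}\, z^{i} y^{j}$,
\[
  \sum_{i,j} c_{ij}\Bigl(\frac{\mathbf{m}\cdot\mathbf{b}}{N}\Bigr)^{i} \alpha_{1,N}^{j m_{1}}\cdots\alpha_{l,N}^{j m_{l}} = 0,
\]
a polynomial-exponential expression whose zeros in $\mathbf{m} \in \Z^{l}$ we want to control. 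The $\Q$-linear independence of $\mathbf{b}$ guarantees that, for each $N$, the values $b_{k}/N$ are $\Q$-linearly independent, so $\alpha_{1,N},\dots,\alpha_{l,N}$ are multiplicatively independent modulo torsion---the nondegeneracy hypothesis typical of such finiteness results.

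If infinitely many rational solutions share a common denominator $N$, passing to such a subsequence immediately yields an infinite integer-zero set for a fixed polynomial-exponential expression. Finiteness in this case should follow from classical results in the style of Laurent, Evertse, and van der Poorten--Schlickewei, or directly from \cite{Zannier2004}. The more delicate situation is when the denominators $N_{n}$ tend to infinity: then the coefficients $\alpha_{k,N_{n}}$ themselves vary with $n$, and a \emph{uniform} finiteness bound is needed, depending only on intrinsic data of $p$ (e.g.\ its monomial support) rather than on $N$. Such uniformity is exactly the content of the function-field approach of \cite{Zannier2004}, in which the $\alpha_{k,N}$ are treated as specialisations of a family of multiplicatively independent parameters, and the number of integer zeros of the resulting polynomial-exponential expression is bounded in terms of its shape only.

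The main obstacle is this unbounded-denominator case: extracting a uniform-in-$N$ bound from Zannier's function-field theorem and verifying that the monomial structure of our expression satisfies its hypotheses (in particular that the irreducibility of $p$ and the $\Q$-linear independence of $\mathbf{b}$ prevent the degenerate situations in which the bound fails). With such a uniform bound in hand, choosing $N$ sufficiently large forces more integer solutions than the bound allows, yielding the desired contradiction.
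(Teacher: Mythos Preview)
Your plan has two genuine gaps. First, the ``bounded denominator'' case is not as routine as you suggest: Laurent-type results only place the non-degenerate integer solutions into finitely many cosets of a subgroup $H\subset\Z^{l}$, and the paper explicitly notes that this is \emph{not} a finiteness statement and, even combined with G\"unaydin's bounded-denominator theorem, is insufficient. The paper instead applies Zannier's function-field theorem with $K=\Q$: its output is again structural (the solution set is a finite union of \emph{classes}), and the content is that within each class one has $\exp((\mathbf{n}-\mathbf{n}_m)\cdot\mathbf{b}')\in\oQ^{*}$. This reduces the problem to the case of algebraic exponentials, which is then finished by a separate argument using Baker's theorem on linear forms in logarithms together with Weil-height estimates. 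None of this last stretch appears in your outline.

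Second, your treatment of the unbounded-denominator case rests on a misreading of \cite{Zannier2004}. That theorem bounds the number of \emph{classes} in terms of the shape of the equation, but each class can be infinite, so it does not yield a uniform-in-$N$ bound on the number of integer zeros. Treating the $\alpha_{k,N}$ as varying specialisations does not help: there is no general mechanism to convert the structure theorem into a cardinality bound. The paper handles denominators by a completely different route: it invokes G\"unaydin's theorem \cite{Gunaydin2011} (itself a non-trivial Diophantine input) to show \emph{a priori} that all rational solutions of \eqref{eq:rational} lie in $\tfrac{1}{N}\Z^{l}$ for a single $N$, thereby reducing to a fixed integer equation before Zannier's theorem is ever applied. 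Your proposal omits this step and has no substitute for it.
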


For simplicity, we shall first prove \prettyref{thm:finite} assuming Schanuel's
conjecture, as this will suffice to prove \prettyref{thm:main}.  We note that
the proof uses only the fact that $\exp$ is a homomorphism from $(\mathbb{C},+)$
to $(\mathbb{C}^{*},\cdot)$ with cyclic kernel, and the Schanuel property in a
few places. This implies that the conclusion of \prettyref{thm:main} holds in
any exponential field having standard kernel, satisfying Schanuel's Conjecture
and where $p(z,\exp(z))=0$ has infinitely many solutions. Similarly, the
conclusion of \prettyref{thm:finite} holds in any exponential field having
standard kernel and satisfying the conclusion of Baker's theorem on
logarithms. In particular, in the axiomatization of Zilber fields it is
sufficient to state that $p(z,\exp(z))=0$ has infinitely many solutions, rather
than requiring the solutions to have maximal transcendence degree.

In an appendix written with U.\ Zannier, we shall provide a different
unconditional proof of \prettyref{thm:finite} based on Baker's theorem on
logarithms in place of Schanuel's conjecture.

\subsection*{Acknowledgements}

The author would like to thank Umberto Zannier for the clever suggestions that
made this work possible and for his substantial contributions to this paper. The
author would also like to thank Ayhan Günaydin, Amador Martin-Pizarro, Paola
D'Aquino, Antongiulio Fornasiero and Giuseppina Terzo for the several
discussions on this subject which led to this paper and to several improvements
as well. Finally, the author would like to thank an anonymous referee for the
useful comments and corrections.

This work was supported by the FIRB2010 ``New advances in the Model Theory of
exponentiation'' RBFR10V792 and by the ERC AdG ``Diophantine Problems'' 267273.

\section{Proof}

Let $p(x,y)\in\Cb[x,y]$ be an irreducible polynomial over $\Cb$ such that
$\frac{\partial p}{\partial x},\frac{\partial p}{\partial y}\neq0$ and let
$k\subset\Cb$ be a finitely generated field. We shall prove that Schanuel's
Conjecture implies that the equation
\begin{equation}
  p(z,\exp(z))=0\label{eq:orig}
\end{equation}
has only finitely many solutions $z$ such that $z\in\overline{k}$, where
$\overline{k}$ denotes the algebraic closure of $k$ in $\Cb$.  Since the
equation has infinitely many solutions in $\Cb$, this easily implies the desired
statement.

This is done in four steps:

\begin{enumerate}
\item as explained in \cite{Marker2006}, if Schanuel's conjecture is true, then
  there is a finite-dimensional $\Q$-vector space $L\subset\Cb$ containing all
  the $z\in\overline{k}$ such that $p(z,\exp(z))=0$; if ${\bf b}$ is a basis of
  $L$ as a $\Q$-vector space, all such solutions can be written as a scalar
  product ${\bf q}\cdot{\bf b}$ with ${\bf q}\in\Q^{n}$
  (\prettyref{sub:red-to-linear});
\item a special case of the main result of \cite{Gunaydin2011} says that there
  is an $N\in\N$ such that actually ${\bf q}\in\Z\left[\frac{1}{N}\right]$ for
  all the solutions; hence, our problem reduces to the one of counting the
  \emph{integer} solutions ${\bf n}\in\Z^{n}$ of an equation of the form
  $p({\bf x}\cdot{\bf b}/N,\exp({\bf x}\cdot{\bf b}/N))=0$
  (\prettyref{sub:red-to-integer});
\item as suggested by Zannier, a function field version of a theorem of M.\
  Laurent \cite{Zannier2004} lets us reduce to the case where
  $\exp({\bf n}\cdot{\bf b})$ is always algebraic for any solution ${\bf n}$
  (\prettyref{sub:red-to-alg});
\item finally, if $2\pi i$ is in $L$, we specialise it to $0$; assuming
  Schanuel's Conjecture, some arithmetic on $\oQ$ is sufficient to prove
  finiteness (\prettyref{sub:finiteness}).
\end{enumerate}

Step (4) can be replaced with a more complicated, but unconditional, argument
where Baker's theorem on logarithms is used in place of Schanuel's Conjecture;
its details are given in the appendix. Steps (2) and (3) are obtained
unconditionally.

\subsection{\label{sub:red-to-linear}Reduction to linear spaces}

Let us recall Schanuel's Conjecture:

\begin{conjecture}[Schanuel]
  \label{conj:schanuel}For all $z_{1},\dots,z_{n}\in\Cb$,
  \[
    \td(z_{1},\dots,z_{n},\exp(z_{1}),\dots,\exp(z_{n}))\geq\ld_{\Q}(z_{1},\dots,z_{n}).
  \]
\end{conjecture}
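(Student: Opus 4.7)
The statement whose proof is being requested is Schanuel's Conjecture itself, which is widely regarded as one of the central open problems in transcendence theory. No proof is known, and no proposal I could make should be represented as a working plan; what follows is the honest shape of the strategy one would attempt, together with an explicit identification of the barrier.

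The natural starting point is Ax's theorem (1971), the differential-field analogue: if $f_{1},\dots,f_{n}$ are $\Q$-linearly independent elements of $t\Cb[[t]]$, then $\td_{\Cb}(f_{1},\dots,f_{n},\exp(f_{1}),\dots,\exp(f_{n}))\geq n$. The ambition is to transfer this from formal power series, or from the abstract differential-algebraic setting, down to the complex numbers. I would proceed by contradiction, starting from $\Q$-linearly independent $z_{1},\dots,z_{n}\in\Cb$ with $\td(z_{1},\dots,z_{n},\exp(z_{1}),\dots,\exp(z_{n}))<n$, and attempt to produce an algebraic or differential-algebraic relation on a formal or analytic family of deformations of $(z_{i},e^{z_{i}})$ to which Ax (or the Ax--Schanuel theorem for the exponential) can be applied.

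The technical backbone would combine (i) a quantitative step, bounding the complexity of the hypothetical polynomial witnessing low transcendence degree, (ii) a zero-estimate or analytic-subgroup step, feeding this complexity into a transcendence machine (Baker, Gel'fond--Schneider, Wüstholz, or Nesterenko in suitable cases), and (iii) a geometric step, probably via o-minimality and the Pila--Wilkie counting theorem on the graph of $\exp$ together with the Ax--Schanuel theorem of Pila--Tsimerman/Bays--Kirby, to derive a functional dependence that contradicts Ax.

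The decisive obstacle, and the reason the conjecture has resisted attack since 1966, lies between steps (ii) and (iii). Baker's theorem settles precisely the sub-case where every $\exp(z_{i})$ is algebraic (yielding $\Q$-linear independence of logarithms of algebraics over $\oQ$), and the Lindemann--Weierstrass theorem handles the case where every $z_{i}$ is algebraic; but the generic situation, in which both the $z_{i}$ and the $\exp(z_{i})$ are transcendental and intertwined by an unexpected algebraic relation, is not reachable by any known transcendence method. Genuinely new input, plausibly a form of Ax--Schanuel strong enough to descend from formal/analytic one-parameter families to single complex points, would be needed; absent that input, the plan above collapses at exactly the point where all previous attempts have.
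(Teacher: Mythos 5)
You are right not to offer a proof: the statement is Schanuel's Conjecture itself, which the paper explicitly labels as a conjecture, describes as ``out of reach'', and uses only as a hypothesis (in \prettyref{thm:main}, \prettyref{prop:fg-field-to-linear}, and \prettyref{prop:finite-alg}); no proof of it appears or is claimed anywhere in the paper. Your identification of the known special cases (Lindemann--Weierstrass, Baker) matches exactly the fragments the paper actually invokes --- Hermite--Lindemann--Weierstrass in \prettyref{prop:red-to-alg} and Baker's theorem in the appendix as an unconditional substitute --- so your assessment of where the genuine difficulty lies is consistent with how the paper is structured.
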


We shall denote by ${\bf z}$ a finite tuple of elements of $\Cb$, and by
$\exp({\bf z})$ the tuple of their exponentials, so that Schanuel's Conjecture
can be rewritten as
\[
  \td({\bf z},\exp({\bf z}))\geq\ld_{\Q}({\bf z}).
\]

\begin{prop}
  \label{prop:fg-field-to-linear}Let $p(x,y)\in\Cb[x,y]$ be an irreducible
  polynomial such that
  $\frac{\partial p}{\partial x},\frac{\partial p}{\partial y}\neq0$ and $k$ be
  a finitely generated subfield of $\Cb$.

  If Schanuel's conjecture holds, then all the solutions of \eqref{eq:orig} in
  $\overline{k}$ are contained in some finite-dimensional $\Q$-linear space
  $L\subset\overline{k}$.
\end{prop}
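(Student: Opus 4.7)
My plan is to deduce the statement directly from Schanuel's Conjecture, using the fact that $\overline{k}$ has finite transcendence degree over $\Q$. Set $d := \td_{\Q}(k) = \td_{\Q}(\overline{k})$, which is finite since $k$ is finitely generated over $\Q$. The whole proof will be a one-line application of Schanuel once the right setup is in place.

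The first step is a routine observation: whenever $z \in \overline{k}$ satisfies $p(z,\exp(z)) = 0$, the exponential $\exp(z)$ also lies in $\overline{k}$. Indeed, the hypothesis $\frac{\partial p}{\partial y} \neq 0$ together with irreducibility of $p$ ensures that $p(z,y)$, viewed as a polynomial in $y$ with coefficients in $k(z) \subseteq \overline{k}$, is nonzero; hence $\exp(z)$ is algebraic over $\overline{k}$ and therefore belongs to $\overline{k}$ (which is algebraically closed in $\Cb$).

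The second step is the Schanuel input. Suppose $z_{1},\dots,z_{n} \in \overline{k}$ are $\Q$-linearly independent solutions of \eqref{eq:orig}. By \prettyref{conj:schanuel} applied to the tuple $(z_{1},\dots,z_{n})$,
\[
  \td(z_{1},\dots,z_{n},\exp(z_{1}),\dots,\exp(z_{n})) \geq n.
\]
By the first step, the $2n$ numbers on the left all lie in $\overline{k}$, so their transcendence degree over $\Q$ is at most $d$. Hence $n \leq d$, which bounds uniformly the $\Q$-linear dimension of the whole solution set.

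Taking $L$ to be the $\Q$-linear span inside $\overline{k}$ of all solutions of \eqref{eq:orig} then yields a $\Q$-subspace of dimension at most $d$ containing every solution, as required. I do not expect any real obstacle: the only moving parts are the observation that $\exp(z) \in \overline{k}$ (which is immediate from the defining equation and the non-degeneracy hypotheses on $p$) and the direct application of Schanuel's inequality. The bound $\dim_{\Q} L \leq \td_{\Q}(k)$ is probably far from optimal, but any finite bound suffices for the statement.
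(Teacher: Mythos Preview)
Your approach matches the paper's, but there is one slip: you assert that $p(z,y)$ has coefficients in $k(z)\subseteq\overline{k}$, and hence that $\exp(z)\in\overline{k}$. This need not hold, since the statement only assumes $p\in\Cb[x,y]$, not $p\in k[x,y]$. For instance, with $k=\Q$ and $p(x,y)=y-ex$, the point $z=1$ lies in $\overline{k}$ and satisfies $p(z,\exp(z))=0$, yet $\exp(1)=e\notin\overline{\Q}$.

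The fix is exactly what the paper does: replace $k$ by the finitely generated field $k'$ obtained by adjoining the coefficients of $p$ to $k$. Then $p(z,y)\in k'(z)[y]\setminus\{0\}$ and $\exp(z)\in\overline{k'}$, so the Schanuel inequality yields $n\le\td_\Q(k')<\infty$. The bound on $\dim_\Q L$ becomes $\td_\Q(k')$ rather than $\td_\Q(k)$, but $L$ itself still sits inside $\overline{k}$ since its generators $z$ do. With this correction your argument is complete and coincides with the paper's proof.
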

\begin{proof}
  Let $k'$ be the field generated by $k$ and the coefficients of $p$.  If
  Schanuel's conjecture holds, for any ${\bf z}\subset\Cb$ we have
  \[
    \td({\bf z},\exp({\bf z}))\geq\ld_{\Q}({\bf z}).
  \]

  However, if each entry $z$ of ${\bf z}$ is in $\overline{k}$ and
  $p(z,\exp(z))=0$, we also have $\exp(z)\in\overline{k}'$, so that in
  particular
  \[
    \td(\overline{k}')=\td(k')\geq\td({\bf z},\exp({\bf z}))\geq\ld_{\Q}({\bf
      z}).
  \]

  This implies that the solutions of \eqref{eq:orig} in $\overline{k}$ live in a
  $\Q$-linear subspace $L\subset\overline{k}$ of dimension at most
  $\td(\overline{k}')$. Since $k'$ is finitely generated, $L$ is
  finite-dimensional, as desired.
\end{proof}

In particular, if ${\bf b}\subset L$ is a $\Q$-linear basis of $L$, then all the
solutions of \prettyref{eq:orig} in $\overline{k}$ are of the form
$\mathbf{q}\cdot\mathbf{b}$ for some vector ${\bf q}$ with rational
coefficients.

\begin{cor}
  \label{cor:red-to-linear}Let $p(x,y)\in\Cb[x,y]$ be an irreducible polynomial
  such that $\frac{\partial p}{\partial x},\frac{\partial p}{\partial y}\neq0$
  and $k$ be a finitely generated subfield of $\Cb$.

  If Schanuel's Conjecture holds, then there exist $l\in\N$ and
  ${\bf b}\in\Cb^{l}$ with $\Q$-linearly independent entries such that
  \eqref{eq:orig} has only finitely many solutions in $\overline{k}$ if and only
  if
  \begin{equation}
    p({\bf x}\cdot{\bf b},\exp({\bf x}\cdot{\bf b}))=0\label{eq:rational}
  \end{equation}
  has only finitely many solutions in $\Q^{l}$.
\end{cor}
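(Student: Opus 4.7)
The plan is to deduce the corollary directly from \prettyref{prop:fg-field-to-linear} by picking a basis of the provided $\Q$-linear space $L$ and transferring the correspondence between points and coordinate vectors to the two equations.

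First, I would apply \prettyref{prop:fg-field-to-linear} to obtain a finite-dimensional $\Q$-linear space $L\subset\overline{k}$ containing every solution of \eqref{eq:orig} that lies in $\overline{k}$. Set $l=\dim_{\Q}L$ and let ${\bf b}=(b_{1},\dots,b_{l})\in L^{l}\subset\Cb^{l}$ be a $\Q$-basis of $L$. By the very definition of a basis, the entries of ${\bf b}$ are $\Q$-linearly independent, which is the first requirement of the statement.

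Next I would set up the bijection that proves the equivalence of finiteness. Because ${\bf b}$ is a $\Q$-basis of $L$, the map $\Phi\colon\Q^{l}\to L$ sending ${\bf q}\mapsto{\bf q}\cdot{\bf b}$ is a $\Q$-linear isomorphism; in particular it is injective. If ${\bf q}\in\Q^{l}$ satisfies \eqref{eq:rational}, then $z:=\Phi({\bf q})={\bf q}\cdot{\bf b}\in L\subset\overline{k}$ is a solution of \eqref{eq:orig} in $\overline{k}$. Conversely, any solution $z\in\overline{k}$ of \eqref{eq:orig} lies in $L$ by the proposition, hence has a unique preimage ${\bf q}=\Phi^{-1}(z)\in\Q^{l}$, which then automatically solves \eqref{eq:rational}. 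Thus $\Phi$ restricts to a bijection between the solution sets of \eqref{eq:rational} in $\Q^{l}$ and of \eqref{eq:orig} in $\overline{k}$, and one set is finite exactly when the other is.

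There is no serious obstacle here; the entire statement is a packaging of \prettyref{prop:fg-field-to-linear} together with the trivial observation that choosing a basis of $L$ identifies points of $L$ bijectively with their rational coordinate vectors. The only minor point to keep in mind is that the $\Q$-linear independence of the entries of ${\bf b}$ is precisely what guarantees the injectivity of the coordinate map, so that no spurious additional solutions to \eqref{eq:rational} are introduced on passing from $L$ to $\Q^{l}$.
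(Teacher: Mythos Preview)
Your proposal is correct and matches the paper's approach exactly: the paper states the corollary without a formal proof, merely noting beforehand that if ${\bf b}$ is a $\Q$-basis of $L$ then every solution in $\overline{k}$ is of the form ${\bf q}\cdot{\bf b}$ for some ${\bf q}\in\Q^{l}$. Your write-up simply spells out this bijection carefully, and there is nothing to add.
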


\subsection{\label{sub:red-to-integer}Reduction to integer solutions}

In \cite{Gunaydin2011} it is shown that the rational solutions of polynomial
equations like \eqref{eq:rational} have bounded denominators, so that our
problem becomes one of counting \emph{integer} solutions.  We recall the
original statement in its full form.

Consider some ${\bf b}_{i}\in\Cb^{t}$, where $t\in\N_{>0}$ and $i$ ranges in
$\{1,\dots,s\}$ for some integer $s>1$. We study the rational solutions of the
equation
\begin{equation}
  \sum_{i=1}^{s}q_{i}({\bf x})\exp({\bf x}\cdot{\bf b}_{i})=0,\label{eq:rational-general}
\end{equation}
where the $q_{i}({\bf x})$'s are polynomials in $\Cb[{\bf x}]$. This includes
\eqref{eq:rational} as a special case.

First of all, we exclude the degenerate solutions. A solution ${\bf q}$ of
\eqref{eq:rational-general} is \emph{degenerate} if there is a finite proper
subset $B\subset\{1,\dots,s\}$ such that
\[
  \sum_{i\in B}q_{i}({\bf q})\exp({\bf q}\cdot{\bf b}_{i})=0.
\]
A solution is \emph{non-degenerate} otherwise. Moreover, we project away the
`trivial part' of the solutions given by the ${\bf q}$'s such that
$\exp({\bf q}\cdot{\bf b}_{i})=\exp({\bf q}\cdot{\bf b}_{j})$ for all $i,j$. Let
$V$ be the subspace of $\Q^{t}$ of such $\Q$-linear relations, i.e.,
\[
  V:=\{{\bf q}\in\Q^{t}\,:\,{\bf q}\cdot{\bf b}_{i}={\bf q}\cdot{\bf
    b}_{j}\mbox{ for all }i,j=1,\dots,s\},
\]
and let $\pi':\Q^{t}\to V'$ be the projection onto some complement $V'$ of $V$
in $\Q^{t}$.

With this notation, we have the following.

\begin{thm}[{\cite[Thm.~1.1]{Gunaydin2011}}]
  \label{thm:gunaydin} Given $q_{1},\dots,q_{s}\in\Cb[{\bf x}]$ and
  ${\bf b}_{1},\dots,{\bf b}_{s}\in\Cb^{t}$, there is $N\in\N_{>0}$ such that if
  ${\bf q}\in\Q^{t}$ is a non-degenerate solution of
  \[
    \sum_{i=1}^{s}q_{i}({\bf x})\exp({\bf x}\cdot{\bf b}_{i})=0,
  \]
  then $\pi'({\bf q})\in\frac{1}{N}\Z^{t}$.
\end{thm}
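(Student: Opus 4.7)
The plan is to combine a normalisation with a Laurent-type unit equation theorem in a multiplicative group of finite rank, and then face the main obstacle, which is promoting the resulting per-denominator finiteness to a uniform bound on denominators.

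First I would perform a bookkeeping reduction. Projecting along $\pi'$, I may assume $V=0$, so that the linear forms $\mathbf{x}\mapsto \mathbf{x}\cdot\mathbf{b}_{i}$ are pairwise distinct on $V'=\Q^{t}$, and it suffices to show directly that $\mathbf{q}\in \frac{1}{N}\Z^{t}$ for all non-degenerate rational solutions $\mathbf{q}$. Dividing the equation by $q_{s}(\mathbf{x})\exp(\mathbf{x}\cdot\mathbf{b}_{s})$ brings it to the form
\[
  1 + \sum_{i=1}^{s-1} r_{i}(\mathbf{x})\exp(\mathbf{x}\cdot\mathbf{c}_{i}) = 0,
\]
with $\mathbf{c}_{i}=\mathbf{b}_{i}-\mathbf{b}_{s}$ non-zero and pairwise distinct, and $r_{i}\in\Cb(\mathbf{x})$; poles of the $r_{i}$ exclude only finitely many $\mathbf{q}$, which is harmless.

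For each fixed denominator $n$, I would then invoke Laurent's theorem on equations in multiplicative subgroups of finite rank. For $\mathbf{q}\in \frac{1}{n}\Z^{t}$, the values $u_{i}:=\exp(\mathbf{q}\cdot\mathbf{c}_{i})$ all lie in the finitely generated multiplicative group $\Gamma_{n}\subset\Cb^{*}$ produced by the $n$th roots of the $\exp(c_{i,j})$. Non-degeneracy of $\mathbf{q}$ translates into non-vanishing of every proper subsum of $\sum r_{i}(\mathbf{q})u_{i}$, so Laurent's theorem yields finiteness of non-degenerate $\mathbf{q}\in \frac{1}{n}\Z^{t}$ for each fixed $n$. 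This is essentially the $s=2$ case of the result, since the value $n$ of the denominator is then absorbed into $\Gamma_{n}$.

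The main obstacle is going from \textquotedblleft finite for each $n$\textquotedblright\ to \textquotedblleft $n$ itself is bounded\textquotedblright. Naively extracting an accumulation point of a putative sequence $\mathbf{q}_{j}$ of non-degenerate solutions with denominators $n_{j}\to\infty$ only produces a real limit, for which non-degeneracy does not transfer cleanly. To circumvent this I would try either a Pila--Wilkie argument (the zero set is definable in $\R_{\mathrm{an},\exp}$, and a Zariski-dense family of rational solutions with unbounded denominators would force a semi-algebraic arc inside it, whence an Ax-type statement on algebraic independence of exponentials would produce a vanishing subsum), or a direct model-theoretic approach, working inside the expansion of $(\Cb,+,\cdot)$ by a predicate for a divisible finite-rank subgroup containing $\exp(\Q\cdot\mathbf{b})$ and appealing to quantifier elimination in the style of Zilber. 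Both routes use more than the abstract homomorphism property of $\exp$, and producing the structural contradiction from unbounded denominators is where I would expect to spend most of the effort.
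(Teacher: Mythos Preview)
The paper does not supply a proof of this statement: it is quoted as \cite[Thm.~1.1]{Gunaydin2011} and used as a black box in the proof of \prettyref{prop:red-to-int}, so there is no in-paper argument to compare your attempt against.

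On the merits of your sketch: the normalisation and the per-$n$ invocation of Laurent's theorem are fine, but, as you yourself acknowledge, they do not touch the actual content of the theorem, which is precisely the uniform bound on the denominator. Neither of your two proposed escapes is convincing as written. The Pila--Wilkie theorem counts rational points of bounded \emph{height} on a transcendental definable set, and bites only when there are more than $H^{\varepsilon}$ such points up to height $H$; a putative sequence of non-degenerate solutions $\mathbf{q}_{j}$ with denominators $n_{j}\to\infty$ has heights tending to infinity as well, so there is nothing to feed into Pila--Wilkie, and no mechanism here manufactures many rational points of small height out of unbounded denominators. Your second route, quantifier elimination for $(\Cb,+,\cdot)$ expanded by a predicate for a divisible finite-rank multiplicative group, is much closer in spirit to G\"{u}naydin's actual method, but what you have written is a pointer to a body of theory rather than an argument: one still has to specify which definable set is being analysed and explain why the Mann-type structure theorem for that pair forces bounded denominators. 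So the proposal correctly isolates the obstacle but does not yet clear it.
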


In the case of \eqref{eq:rational}, we can easily deduce that there must be an
$N>0$ such that \emph{all} its rational solutions are in $\frac{1}{N}\Z^{t}$.

\begin{prop}
  \label{prop:red-to-int}Let $p(x,y)\in\Cb[x,y]$ be an irreducible polynomial
  such that $\frac{\partial p}{\partial x},\frac{\partial p}{\partial y}\neq0$
  and ${\bf b}\in\Cb^{l}$ be a vector with $\Q$-linearly independent
  entries. Then there exists an integer $N>0$ such that the rational solutions
  of \eqref{eq:rational} are contained in $\frac{1}{N}\Z^{l}$.

  In particular, there exists a ${\bf b}'\in\Cb^{l}$ with $\Q$-linearly
  independent entries such that \eqref{eq:rational} has only finitely many
  rational solutions if and only if
  \begin{equation}
    p({\bf x}\cdot{\bf b}',\exp({\bf x}\cdot{\bf b}'))=0\label{eq:integer}
  \end{equation}
  has only finitely many integer solutions.
\end{prop}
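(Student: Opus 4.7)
The plan is to recognise \eqref{eq:rational} as a special instance of \eqref{eq:rational-general} and then invoke \prettyref{thm:gunaydin}. Writing $p(X,Y)=\sum_{i\in I}c_{i}(X)Y^{i}$ with $c_{i}\in\Cb[X]\setminus\{0\}$ and $I\subseteq\{0,\dots,\deg_{Y}p\}$, the hypothesis $\partial p/\partial y\neq0$ guarantees $|I|\geq2$. The equation \eqref{eq:rational} then reads
\[
  \sum_{i\in I}c_{i}({\bf x}\cdot{\bf b})\exp({\bf x}\cdot(i{\bf b}))=0,
\]
which has the form of \eqref{eq:rational-general} with $q_{i}({\bf x})=c_{i}({\bf x}\cdot{\bf b})$ and ${\bf b}_{i}=i{\bf b}$. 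For this system the subspace $V\subseteq\Q^{l}$ of \prettyref{thm:gunaydin} is trivial: picking two distinct $i,j\in I$, the defining condition forces $(i-j)({\bf q}\cdot{\bf b})=0$, and the $\Q$-linear independence of the entries of ${\bf b}$ then yields ${\bf q}=0$. Hence $\pi'$ is the identity, and \prettyref{thm:gunaydin} directly produces a denominator bound for every non-degenerate rational solution.

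The main technical point is to absorb the degenerate solutions. Given any rational solution ${\bf q}$, I would pick a minimal (for inclusion) nonempty $B\subseteq I$ with $\sum_{i\in B}c_{i}({\bf q}\cdot{\bf b})\exp(i\,{\bf q}\cdot{\bf b})=0$. By minimality, ${\bf q}$ is a non-degenerate solution of the sub-equation associated to $B$. If $|B|\geq2$, the same triviality argument shows that the $V$ attached to this sub-equation is again $\{0\}$, so \prettyref{thm:gunaydin} gives an integer $N_{B}>0$ with ${\bf q}\in\tfrac{1}{N_{B}}\Z^{l}$. If $|B|=\{i\}$ is a singleton, then $c_{i}({\bf q}\cdot{\bf b})=0$, which leaves only finitely many admissible values of ${\bf q}\cdot{\bf b}$ and, once more by $\Q$-linear independence of ${\bf b}$, only finitely many ${\bf q}$; these can be accommodated by a suitable $N_{\{i\}}$. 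Since $I$ is finite, setting $N$ equal to the least common multiple of the $N_{B}$ over all nonempty $B\subseteq I$ proves the first assertion.

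For the ``in particular'' statement, I would set ${\bf b}':={\bf b}/N$, which still has $\Q$-linearly independent entries. The map ${\bf x}\mapsto N{\bf x}$ is a bijection between $\tfrac{1}{N}\Z^{l}$ and $\Z^{l}$, and under it ${\bf x}\cdot{\bf b}=(N{\bf x})\cdot{\bf b}'$. Combined with the first part, this yields a bijection between the rational solutions of \eqref{eq:rational} and the integer solutions of \eqref{eq:integer}, giving the claimed equivalence.

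The main obstacle I expect is purely bookkeeping: verifying that the minimal-subset argument cleanly reduces every rational solution to a non-degenerate solution of some sub-equation in which the conclusion of \prettyref{thm:gunaydin} may be applied, and confirming that in each such sub-equation the space $V$ remains trivial so that the denominator bound controls ${\bf q}$ itself rather than merely $\pi'({\bf q})$.
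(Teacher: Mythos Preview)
Your proposal is correct and follows essentially the same route as the paper: rewrite \eqref{eq:rational} in the form \eqref{eq:rational-general}, observe that for every sub-equation indexed by a set $B$ with $|B|\ge 2$ the associated space $V_B$ is trivial because the entries of ${\bf b}$ are $\Q$-linearly independent, apply \prettyref{thm:gunaydin} to each such $B$, handle the finitely many solutions coming from vanishing coefficients separately, take the least common multiple of the resulting denominators, and finally set ${\bf b}'={\bf b}/N$. The only cosmetic difference is that the paper first isolates the solutions with $q_i({\bf q}\cdot{\bf b})=0$ for all $i$ and then argues that any remaining solution admits some $B$ with $|B|\ge 2$, whereas you fold both cases into a single minimal-$B$ argument; the content is the same (note the small typo ``$|B|=\{i\}$'' should read ``$B=\{i\}$'').
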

\begin{proof}
  We can rewrite \eqref{eq:rational} as
  \[
    p({\bf x}\cdot{\bf b},\exp({\bf x}\cdot{\bf b}))=\sum_{i=0}^{d}q_{i}({\bf
      x}\cdot{\bf b})\cdot\exp(i{\bf x}\cdot{\bf b})=0.
  \]

  There are at most finitely many solutions such that
  $q_{i}({\bf x}\cdot{\bf b})=0$ for all $i=0,\dots,d$; indeed, for such
  solutions the value ${\bf x}\cdot{\bf b}$ ranges in a finite set, and since
  the entries of ${\bf b}$ are $\Q$-linearly independent, each value of
  ${\bf x}\cdot{\bf b}$ determines at most one value of ${\bf x}$. Therefore,
  there is a positive integer $N_{0}$ such that these solutions are contained in
  $\frac{1}{N_{0}}\Z^{l}$.

  Consider now the solutions such that $q_{i}({\bf x}\cdot{\bf b})\neq0$ for at
  least one $i$. For each such solution ${\bf q}$, there must be a subset
  $B\subset\{0,\dots,d\}$ with $|B|\geq2$ such that ${\bf q}$ is a
  non-degenerate solution of
  \[
    \sum_{i\in B}q_{i}({\bf x}\cdot{\bf b})\cdot\exp(i{\bf x}\cdot{\bf b})=0.
  \]

  We apply \prettyref{thm:gunaydin} to the equation given by such a $B$. The
  corresponding
  \[
    V_{B}=\{{\bf q}\in\Q^{l}\,:\,{\bf q}\cdot(i{\bf b})={\bf q}\cdot(j{\bf
      b})\mbox{ for all }i,j\in B\}
  \]
  is null, since the entries of ${\bf b}$ are $\Q$-linearly independent and
  $|B|\geq2$; therefore, the non-degenerate solutions lie in
  $\frac{1}{N_{B}}\Z^{l}$ for some positive integer $N_{B}$.

  We now define $N$ as the least common multiple of the various $N_{B}$ and of
  $N_{0}$, so that all the rational solutions are contained in
  $\frac{1}{N}\Z^{l}$. In particular, the rational solutions of
  \eqref{eq:rational} are in bijection with the integer solutions of the
  equation
  \[
    p({\bf x}\cdot{\bf b}/N,\exp({\bf x}\cdot{\bf b}/N))=p({\bf x}\cdot{\bf
      b}',\exp({\bf x}\cdot{\bf b}'))=0,
  \]
  where ${\bf b}':={\bf b}/N$, proving the desired conclusion.
\end{proof}

\subsection{\label{sub:red-to-alg}Reduction to algebraic exponentials}

Using the main result of \cite{Zannier2004}, we can reduce the problem of
finding integer solutions of \eqref{eq:integer} to the case where
$\exp({\bf b}')\subset\oQ^{*}$. In the following, if ${\bf A}$ is a vector
$(a_{1},\dots,a_{n})$ in $\Cb^{n}$ and ${\bf m}$ is a vector
$(m_{1},\dots,m_{n})$ in $\N^{n}$, we write ${\bf A}^{{\bf m}}$ to denote the
product $a_{1}^{m_{1}}\cdot\dots\cdot a_{n}^{m_{n}}$.

As in \cite{Zannier2004}, we start from the equation
\begin{equation}
  \sum_{i=1}^{s}q_{i}({\bf x}){\bf A}_{i}^{{\bf x}}=0\label{eq:zannier}
\end{equation}
where ${\bf A}_{i}\in\Cb^{t}$. In \cite{Laurent1989}, M.\ Laurent showed that in
a precise sense the non-degenerate solutions are not far away from the submodule
\[
  H:=\{{\bf n}\in\Z^{t}\,:\,{\bf A}_{i}^{{\bf n}}={\bf A}_{j}^{{\bf n}}\mbox{
    for all }i,j=1,\dots,s\}.
\]

Note that $H$ is indeed defined similarly to $V$. If the polynomials $q_{i}$ are
constant, then it turns out that the solutions actually lie in a finite union of
translates of $H$. However, this is not a finiteness result, and as mentioned
in~\cite{Gunaydin2011}, even when combined with \prettyref{thm:gunaydin} it is
not sufficient to prove \prettyref{thm:main}.

On the other hand, we may control the solutions using a ``function field''
version of Laurent's theorem as found in \cite{Zannier2004}.  In this version,
consider two finitely generated fields $K\subset F$ such that the polynomials
and the ${\bf A}_{i}$'s are defined over $F$ and $\td_{K}(F)\geq1$. Rather than
looking for zeroes, the theorem makes a statement regarding the set
\[
  S:=\{{\bf n}\in\Z^{t}\,:\,\mbox{the }q_{i}({\bf n}){\bf A}_{i}^{{\bf n}}\mbox{
    are }\overline{K}\mbox{-linearly dependent}\}
\]
which contains at least the solutions of \eqref{eq:zannier}.

If we know that
$\left({\bf A}_{i}{\bf A}_{j}^{-1}\right)^{{\bf n}}\in\overline{K}^{*}$ is true
for all $i,j$, then we can rewrite
$q_{i}({\bf n}){\bf A}_{i}^{{\bf n}}=q_{i}({\bf n}){\bf A}_{1}^{{\bf
    n}}\left({\bf A}_{i}{\bf A}_{1}^{-1}\right)^{{\bf n}}$, and it follows that
the $q_{i}({\bf n}){\bf A}_{i}^{{\bf n}}$'s are $\overline{K}$-linearly
dependent if and only if the $q_{i}({\bf n})$'s are. Moreover, if there is some
$B\subseteq\{1,\dots,s\}$ such that the $q_{i}({\bf n}){\bf A}_{i}^{{\bf n}}$
for $i\in B$ are $\overline{K}$-linearly dependent, we may deduce the analogous
conclusion if
$\left({\bf A}_{i}{\bf A}_{j}^{-1}\right)^{{\bf n}}\in\overline{K}^{*}$ is true
just for $i,j$ varying in $B$. We group the elements of $S$ accordingly.

\begin{defn}
  Let $B$ be a nonempty subset of $\{1,\dots,s\}$. A set $S'\subset\Z^{t}$ is a
  \emph{class relative to $B$} if
  \begin{enumerate}
  \item for each ${\bf n}\in S'$, the elements
    $q_{i}({\bf n}){\bf A}_{i}^{{\bf n}}$ for $i\in B$ are
    $\overline{K}$-linearly dependent;
  \item there is an ${\bf n}_{0}\in S'$ such that for all ${\bf n}\in\Z^{t}$,
    the vector ${\bf n}$ is in $S'$ if and only if it satisfies (1) and for all
    $i,j\in B$ we have
    $({\bf A}_{i}{\bf A}_{j}^{-1})^{{\bf n}-{\bf n}_{0}}\in\overline{K}^{*}$.
  \end{enumerate}
\end{defn}

\begin{thm}[{\cite[Thm.\ 1]{Zannier2004}}]
  \label{thm:zannier}The set $S$ is a union of finitely many classes.
\end{thm}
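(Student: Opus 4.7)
The plan is to argue by induction on $s$. When $s=1$, a single element $q_{1}(\mathbf{n})\mathbf{A}_{1}^{\mathbf{n}}$ is $\overline{K}$-linearly dependent exactly when it vanishes, i.e.\ when $q_{1}(\mathbf{n})=0$, and the resulting set of integer zeros is a single class relative to $B=\{1\}$, the coset condition in part~(2) being trivial for a singleton.

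For the inductive step, I would attach to each $\mathbf{n}\in S$ a \emph{minimal} subset $B=B(\mathbf{n})\subseteq\{1,\dots,s\}$ such that $\{q_{i}(\mathbf{n})\mathbf{A}_{i}^{\mathbf{n}}\}_{i\in B}$ is $\overline{K}$-linearly dependent, and partition $S$ according to the value of $B(\mathbf{n})$. Whenever $|B|<s$, the inductive hypothesis applied to the sub-equation indexed by $B$ covers the corresponding $\mathbf{n}$'s with finitely many classes. The remaining case is the \emph{non-degenerate} one $B(\mathbf{n})=\{1,\dots,s\}$, where a witnessing relation
\[
  \sum_{i=1}^{s}c_{i}\,q_{i}(\mathbf{n})\,\mathbf{A}_{i}^{\mathbf{n}}=0
\]
holds with $c_{i}\in\overline{K}^{*}$ and no vanishing proper subsum.

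For these non-degenerate $\mathbf{n}$ I would split by the dichotomy: either all ratios $(\mathbf{A}_{i}\mathbf{A}_{j}^{-1})^{\mathbf{n}}$ lie in $\overline{K}^{*}$, or at least one escapes. In the first case $\mathbf{n}$ belongs to the coset $\mathbf{n}+H_{B}$, where
\[
  H_{B}:=\{\mathbf{m}\in\Z^{t}\,:\,(\mathbf{A}_{i}\mathbf{A}_{j}^{-1})^{\mathbf{m}}\in\overline{K}^{*}\mbox{ for all }i,j\in B\}
\]
is a subgroup of $\Z^{t}$, and this coset, intersected with the locus where the dependence condition (1) holds, is precisely a class relative to $B$; the first task is then to show that only finitely many such cosets carry solutions. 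In the second case the task is to show that the set of offending $\mathbf{n}$'s is finite in its own right.

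Both tasks exploit the hypothesis $\td_{K}(F)\geq1$, which supplies a non-trivial derivation $D:F\to F$ vanishing on $\overline{K}$. Applying $D$ to the witnessing relation and using
\[
  D(\mathbf{A}_{i}^{\mathbf{n}})=\mathbf{A}_{i}^{\mathbf{n}}\sum_{k=1}^{t}n_{k}\,\frac{D(A_{i,k})}{A_{i,k}}
\]
produces further linear relations among the $q_{i}(\mathbf{n})\mathbf{A}_{i}^{\mathbf{n}}$ with coefficients linear in the entries of $\mathbf{n}$; iterating yields a system whose consistency forces polynomial constraints on $\mathbf{n}$ outside the cosets of $H_{B}$ already accounted for. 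The main obstacle will be orchestrating this uniformly across all possible minimal supports $B$, so that both the coset count and the escaping-ratio count are bounded in terms of the input data alone; this is exactly the content of the function-field Laurent-type arguments developed in \cite{Zannier2004}, which I would invoke rather than re-derive the hardest step.
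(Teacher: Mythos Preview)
The paper does not prove this theorem at all: it is quoted verbatim as \cite[Thm.~1]{Zannier2004} and used as a black box, with only a remark noting that the original argument also works when $\td_{K}(F)>1$. There is therefore no ``paper's own proof'' to compare your proposal against.

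Your sketch is a plausible outline of how the argument in \cite{Zannier2004} is organised (induction on $s$, isolating non-degenerate relations, differentiating along a derivation trivial on $\overline{K}$ to manufacture new relations whose coefficients are linear in $\mathbf{n}$). But as written it is not a proof: the two ``tasks'' you isolate---bounding the number of $H_{B}$-cosets carrying non-degenerate solutions, and showing finiteness when some ratio $(\mathbf{A}_{i}\mathbf{A}_{j}^{-1})^{\mathbf{n}}$ escapes $\overline{K}^{*}$---are exactly the substance of Zannier's theorem, and you explicitly say you would invoke \cite{Zannier2004} rather than carry them out. That is circular if the goal is to prove the theorem, and redundant if the goal is merely to use it, since the paper already cites it directly. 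If you intend this as an independent proof, the genuine gap is that differentiating the witnessing relation does not by itself force the required polynomial constraints on $\mathbf{n}$: one must control how the derived relations interact with the minimality of $B$, and this is where the real work (heights in function fields, or a careful induction on the rank of the group generated by the $\mathbf{A}_{i}\mathbf{A}_{j}^{-1}$ modulo $\overline{K}^{*}$) lies.
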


\begin{rem}
  In the original article, the theorem is only stated when $F$ has transcendence
  degree $1$ over $K$; it is however noted that the arguments would actually
  carry on to larger transcendence degrees (a summary of the few required
  changes is given in \cite[Rmk.\ 3]{Zannier2004}).  The restricted version with
  $\td(F/K)=1$ would also work for our purposes, as it lets us reduce the
  transcendence degree of the exponentials by one; a careful argument with
  specialisations would then let us reduce the transcendence degree of the
  coefficients, so that the theorem may be applied again, ultimately leading to
  algebraic exponentials.

  The original theorem also puts explicit bounds to the number of classes in
  terms of the degrees of the polynomials $q_{i}$ and on their number.
\end{rem}

With this theorem we can reduce our problem regarding \eqref{eq:integer} to the
special case in which the exponentials are contained in $\oQ$.

\begin{prop}
  \label{prop:red-to-alg}Let $p(x,y)\in\Cb[x,y]$ be an irreducible polynomial
  such that $\frac{\partial p}{\partial x},\frac{\partial p}{\partial y}\neq0$
  and ${\bf b}'\in\Cb^{l}$ be a vector with $\Q$-linearly independent
  entries. Then there are finitely many irreducible polynomials
  $r_{m}(x,y)\in\Cb[x,y]$ and a vector ${\bf c}$ with $\Q$-linearly independent
  entries such that
  $\frac{\partial r_{m}}{\partial x},\frac{\partial r_{m}}{\partial y}\neq0$,
  $\exp({\bf c})\subset\oQ^{*}$, and such that \eqref{eq:integer} has only
  finitely many integer solutions if and only if each equation
  \begin{equation}
    r_{m}({\bf x}\cdot{\bf c},\exp({\bf x}\cdot{\bf c}))=0\label{eq:integer-alg}
  \end{equation}
  has only finitely many integer solutions.
\end{prop}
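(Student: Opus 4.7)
The plan is to apply \prettyref{thm:zannier} in a single step, with base field $K=\oQ$, invoking the extension to higher transcendence degree noted in the remark following the theorem. First I would rewrite \prettyref{eq:integer} in the form of \prettyref{eq:zannier}: expanding $p$ as a polynomial in its second argument yields
\[
  p({\bf x}\cdot{\bf b}',\exp({\bf x}\cdot{\bf b}'))=\sum_{i=0}^{d}q_{i}({\bf x}\cdot{\bf b}')\,{\bf A}_{i}^{{\bf x}},
\]
with ${\bf A}_{i}:=\exp(i{\bf b}')$. If $\exp({\bf b}')\subset\oQ^{*}$ already, the conclusion is trivial with ${\bf c}={\bf b}'$ and $r_{1}=p$, so I may assume $\td_{\oQ}(F)\geq1$ for the finitely generated field $F$ containing the coefficients of $p$ and the entries of the ${\bf A}_{i}$.

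Applying \prettyref{thm:zannier} then splits the set $S$---which contains every integer solution of \prettyref{eq:integer}---into finitely many classes $S'_{\ell}$ relative to subsets $B_{\ell}\subseteq\{0,\dots,d\}$. Classes with $|B_{\ell}|=1$ force $q_{i}({\bf x}\cdot{\bf b}')$ to vanish; since ${\bf b}'$ has $\Q$-linearly independent entries, these contribute only finitely many solutions and may be discarded. For $|B_{\ell}|\geq2$, the elements of $S'_{\ell}$ have the form ${\bf n}={\bf n}_{0,\ell}+{\bf m}$ with ${\bf m}$ in the subgroup
\[
  H'_{\ell}:=\{{\bf m}\in\Z^{l}\,:\,({\bf A}_{i}{\bf A}_{j}^{-1})^{{\bf m}}\in\oQ^{*}\ \text{for all }i,j\in B_{\ell}\}.
\]
A key observation is that the $\Q$-span of every such $H'_{\ell}$ equals the single subspace $W:=\{{\bf m}\in\Q^{l}\,:\,\exp({\bf m}\cdot{\bf b}')\in\oQ^{*}\}$, because $\oQ^{*}$ is closed under extracting roots; call its dimension $l'$. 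Fixing a $\Z$-basis ${\bf h}_{\ell,k}$ of $H'_{\ell}$ and setting $\tilde{{\bf b}}_{\ell}:=({\bf h}_{\ell,k}\cdot{\bf b}')_{k}\in\Cb^{l'}$, $\alpha_{\ell}:={\bf n}_{0,\ell}\cdot{\bf b}'$, $\beta_{\ell}:=\exp(\alpha_{\ell})$, and $\tilde{p}_{\ell}(u,v):=p(u+\alpha_{\ell},\beta_{\ell}v)$, the equation restricted to $S'_{\ell}$ becomes $\tilde{p}_{\ell}(\mu\cdot\tilde{{\bf b}}_{\ell},\exp(\mu\cdot\tilde{{\bf b}}_{\ell}))=0$ for $\mu\in\Z^{l'}$. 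A direct check shows that $\tilde{p}_{\ell}$ inherits irreducibility and the nonvanishing of both partials from $p$ (the change of variables is an invertible affine map on $\Cb\times\Cb^{*}$), the entries of $\tilde{{\bf b}}_{\ell}$ are $\Q$-linearly independent, and, for $d_{\ell}:=\gcd\{i-j\,:\,i,j\in B_{\ell}\}$, the condition $\exp(d_{\ell}\tilde{b}_{\ell,k})\in\oQ^{*}$ forces $\exp(\tilde{b}_{\ell,k})\in\oQ^{*}$ as an algebraic $d_{\ell}$-th root.

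To finish, I would consolidate into a single vector ${\bf c}$ by taking a $\Q$-basis of the common span $\{w\cdot{\bf b}'\,:\,w\in W\}$, rescaled by a suitable common denominator so that each $\tilde{{\bf b}}_{\ell}$ becomes an integer combination of the entries of ${\bf c}$. The main obstacle is the lattice bookkeeping in this step: after the change of basis, each $\tilde{{\bf b}}_{\ell}$ parameterizes only a full-rank sublattice of $\Z^{l'}$, so the finiteness of integer solutions of the single equation $r_{\ell}({\bf x}\cdot{\bf c},\exp({\bf x}\cdot{\bf c}))=0$ for ${\bf x}\in\Z^{l'}$ does not a priori coincide with that of the sub-equation in $\mu\in\Z^{l'}$. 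The fix I envisage is to further decompose $\Z^{l'}$ into the finitely many cosets of each such sublattice and produce one polynomial $r_{m}$ per (class, coset) pair---each being a further affine shift of $\tilde{p}_{\ell}$, hence still irreducible with nonvanishing partials and with the same ${\bf c}$---so that the collection of $r_{m}$-equations, each considered over the full $\Z^{l'}$, accounts for every integer solution of \prettyref{eq:integer} exactly once and yields the required bidirectional finiteness equivalence.
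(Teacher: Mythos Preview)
Your approach matches the paper's: rewrite \eqref{eq:integer} in the form \eqref{eq:zannier}, apply \prettyref{thm:zannier} over $K=\Q$, discard the finitely many solutions coming from singleton classes, and for each remaining class produce a shifted polynomial $r_m$ whose associated exponentials are algebraic. The lattice bookkeeping you flag as the main obstacle is, however, illusory: the groups $H'_\ell$ are not merely commensurable but \emph{equal} as subgroups of $\Z^l$, since $(\mathbf{A}_i\mathbf{A}_j^{-1})^{\mathbf{m}}\in\oQ^*$ for all $i,j\in B_\ell$ is equivalent---via the very root-closure argument you invoke for $d_\ell$---to the single condition $\exp(\mathbf{m}\cdot\mathbf{b}')\in\oQ^*$, which does not depend on $\ell$. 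The paper accordingly takes $\mathbf{c}$ to be a $\Z$-basis of the lattice $\log(\oQ^*)\cap\mathrm{span}_{\Z}(\mathbf{b}')$ once and for all; every $(\mathbf{n}-\mathbf{n}_m)\cdot\mathbf{b}'$ then lies in $\mathrm{span}_{\Z}(\mathbf{c})$ on the nose, and your coset decomposition becomes unnecessary.
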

\begin{proof}
  Let us write
  \[
    p({\bf x}\cdot{\bf b}',\exp({\bf x}\cdot{\bf b}'))=\sum_{i=0}^{d}p_{i}({\bf
      x}\cdot{\bf b}')\exp(i{\bf x}\cdot{\bf b}')=\sum_{i=0}^{d}q_{i}({\bf
      x})\exp(i{\bf x}\cdot{\bf b}')=0.
  \]

  This is a special case of \eqref{eq:zannier}, so we can apply
  \prettyref{thm:zannier}.  We choose as $F$ a field of definition of $p_{i}$,
  ${\bf b}'$ and $\exp({\bf b}')$, and we pick $K=\Q$. We may assume that
  $\td_{\Q}(F)\geq1$, otherwise the conclusion is trivial as we would have
  $\exp({\bf b}')\subset\oQ$.  Note that $\td_{\Q}(F)\geq1$ is actually always
  true, since at least one between ${\bf b}'$ and $\exp({\bf b}')$ must contain
  a transcendental element by the Hermite-Lindemann-Weierstrass theorem
  \cite{Hermite1873,VonLindemann1882,Weierstrass1885}.

  Let us drop the terms such that $q_{i}\equiv0$, as they do not contribute to
  the sum. As before, there are at most finitely many solutions ${\bf n}$ such
  that $q_{i}({\bf n})=p_{i}({\bf n}\cdot{\bf b}')=0$ for some non-zero $q_{i}$,
  because the entries of ${\bf b}'$ are $\Q$-linearly independent. The remaining
  solutions are such that the corresponding terms in the sum are all non-zero,
  but $\oQ$-linearly dependent, since their sum is $0$. By
  \prettyref{thm:zannier}, these solutions are contained in finitely many
  classes, which means that there exist ${\bf n}_{1}$, $\dots$, ${\bf n}_{k}$
  and $B_{1}$, $\dots$, $B_{k}$ such that for every such solution ${\bf n}$
  there is some $m$ satisfying
  \[
    \exp((i-j)({\bf n}-{\bf n}_{m})\cdot{\bf b}')\in\oQ^{*}
  \]
  for all $i$, $j$ in $B_{m}$. Since no term vanishes, we must have
  $|B_{m}|\geq2$, and the latter condition becomes equivalent to
  \[
    \exp(({\bf n}-{\bf n}_{m})\cdot{\bf b}')\in\oQ^{*}.
  \]

  Let ${\bf c}$ be a $\Z$-linear basis of
  $\log(\oQ^{*})\cap\mathrm{span}_{\Z}({\bf b}')$, so that for all of the above
  vectors ${\bf n}-{\bf n}_{m}$ we have
  $({\bf n}-{\bf n}_{m})\cdot{\bf b}'={\bf n}'\cdot{\bf
    c}\in\mathrm{span}_{\Z}({\bf c})$ for some ${\bf n}'\in\Z^{l'}$. Each
  ${\bf n}'$ is an integer solution of
  \[
    r_{m}({\bf x}\cdot{\bf c},\exp({\bf x}\cdot{\bf c})):=p({\bf n}_{m}\cdot{\bf
      b}'+{\bf x}\cdot{\bf c},\exp({\bf n}_{m}\cdot{\bf b}'+{\bf x}\cdot{\bf
      c}))=0
  \]
  for some $m$; moreover, the map
  ${\bf n}'\mapsto({\bf n}-{\bf n}_{m})\mapsto{\bf n}$ is clearly injective and,
  as $m$ varies, it covers all the integer solutions of \eqref{eq:integer},
  except at most finitely many ones.  Therefore, \eqref{eq:integer} has only
  finitely many integer solutions if and only if each of the above equations
  have only finitely many integer solutions, as desired.
\end{proof}

\subsection{\label{sub:finiteness}Finiteness}

We can finally prove that the integer solutions of \eqref{eq:integer-alg} are
only finitely many. In order to prove that, we use some classical results about
the arithmetic of $\oQ$, and more specifically the properties of the logarithmic
Weil height $\wht:\oQ^{*}\to\R_{\geq0}$.

We just recall the following facts about the function $h$. Let
$\boldsymbol{\gamma}\in(\oQ^{*})^{t}$ and ${\bf m}\in\Z^{t}$. We denote by
$|{\bf m}|_{1}$ the $1$-norm of ${\bf m}$. There are positive numbers
$a_{1}=a_{1}(\boldsymbol{\gamma})$, $a_{2}=a_{2}(\boldsymbol{\gamma})$,
$a_{3}=a_{3}(\boldsymbol{\gamma})$ depending on $\boldsymbol{\gamma}$ only such
that:

\begin{enumerate}
\item
  $h({\bf m}\cdot\boldsymbol{\gamma})\leq a_{1}(\boldsymbol{\gamma})\log|{\bf
    m}|_{1}$;
\item
  $\wht(\boldsymbol{\gamma}^{{\bf m}})\leq a_{2}(\boldsymbol{\gamma})|{\bf
    m}|_{1}$;
\item if the entries of $\boldsymbol{\gamma}$ are multiplicatively independent,
  then
  $\wht(\boldsymbol{\gamma}^{{\bf m}})\geq a_{3}(\boldsymbol{\gamma})|{\bf
    m}|_{1}$.
\end{enumerate}

Moreover, if $f\in\oQ[x,y]$ is such that $\frac{\partial f}{\partial x}\neq0$,
there is $a_{4}=a_{4}(f)$ depending on $f$ only such that if
$\alpha,\beta\in\oQ^{*}$ and $f(\alpha,\beta)=0$, then
$\wht(\alpha)\leq c_{4}(f)h(\beta)$.  We also recall that if $n\in\Z^{*}$, then
$h(n)=\log|n|$.

\begin{prop}
  \label{prop:finite-alg}Let $r_{m}(x,y)\in\Cb[x,y]$ be an irreducible
  polynomial such that
  $\frac{\partial r_{m}}{\partial x},\frac{\partial r_{m}}{\partial y}\neq0$ and
  ${\bf c}\in\Cb^{l}$ be a vector with $\Q$-linearly independent entries such
  that $\exp({\bf c})\subset\oQ^{*}$.

  If Schanuel's Conjecture holds, then \eqref{eq:integer-alg} has only finitely
  many integer solutions.
\end{prop}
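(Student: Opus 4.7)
The plan is to upgrade $r_{m}=0$ to an equivalent equation over $\oQ$ via a Schanuel-based specialisation, and then close by the recalled height inequalities. First, I would apply Schanuel to ${\bf c}$: since $\exp({\bf c})\subset\oQ^{*}$ contributes nothing to the transcendence degree while $\ld_{\Q}({\bf c})=l$, the inequality forces $\td({\bf c})\geq l$, so ${\bf c}$ is algebraically independent over $\Q$. Moreover, the components of $\exp({\bf c})$ are multiplicatively independent in $\oQ^{*}$ precisely when $2\pi i\notin\mathrm{span}_{\Q}({\bf c})$: any relation $\prod\exp(c_{i})^{m_{i}}=1$ forces ${\bf m}\cdot{\bf c}\in 2\pi i\Z\setminus\{0\}$, and hence $2\pi i\in\mathrm{span}_{\Q}({\bf c})$. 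The degenerate case $2\pi i\in\mathrm{span}_{\Q}({\bf c})$ I would dispose of first by specialising $2\pi i\mapsto 0$ in the coefficients of $r_{m}$ and replacing ${\bf c}$ by a basis of a complement, reducing to the non-degenerate case.

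\textbf{Step 2 (descent to $\oQ$).} Next, I would exploit the algebraic independence of ${\bf c}$ (enlarging, if needed, by a further Schanuel application to a transcendence basis coming from ${\bf b}'$ and $\exp({\bf b}')$) to choose generic $\alpha_{1},\dots,\alpha_{l}\in\oQ$ and extend the assignment $c_{i}\mapsto\alpha_{i}$ to a $\Q$-algebra homomorphism on the finitely generated subalgebra of $\Cb$ containing all coefficients of $r_{m}$. The specialised polynomial $\tilde r\in\oQ[x,y]$ would then satisfy $\tilde r(\tilde z_{{\bf n}},\gamma_{{\bf n}})=0$ for every integer solution ${\bf n}$, with $\tilde z_{{\bf n}}:=\sum_{i}n_{i}\alpha_{i}\in\oQ$ and $\gamma_{{\bf n}}:=\exp({\bf n}\cdot{\bf c})\in\oQ^{*}$ untouched by the specialisation. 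A generic choice of the $\alpha_{i}$ ensures that $\partial\tilde r/\partial x$ and $\partial\tilde r/\partial y$ remain nonzero.

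\textbf{Step 3 (height contradiction).} I would then combine the three recalled bounds applied to $\tilde r$: by (3) and the multiplicative independence of $\exp({\bf c})$, $\wht(\gamma_{{\bf n}})\geq a_{3}|{\bf n}|_{1}$; by (4) applied to $\tilde r$ in the variant using $\partial\tilde r/\partial y\neq 0$, $\wht(\gamma_{{\bf n}})\leq c\,\wht(\tilde z_{{\bf n}})$; and by (1) with all $\alpha_{i}\in\oQ$, $\wht(\tilde z_{{\bf n}})\leq a_{1}\log|{\bf n}|_{1}$. Chaining these yields $a_{3}|{\bf n}|_{1}\leq c\,a_{1}\log|{\bf n}|_{1}$, which forces $|{\bf n}|_{1}$ below an absolute constant and hence leaves only finitely many integer solutions, as desired.

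\textbf{Main obstacle.} The delicate step is the descent in Step 2: extending $c_{i}\mapsto\alpha_{i}$ (and, in the degenerate case, $2\pi i\mapsto 0$) to a well-defined $\Q$-algebra homomorphism on the full coefficient field of $r_{m}$, which in general contains transcendentals inherited from ${\bf b}'$ and $\exp({\bf b}')$ outside $\Q({\bf c})$. The algebraic independence of a suitable transcendence basis, which makes such a specialisation consistent, is precisely where Schanuel is genuinely used; one must also verify that $\tilde r$ retains nonzero partials so that (4) applies. The appendix is expected to replace this step with an unconditional argument via Baker's theorem on linear forms in logarithms.
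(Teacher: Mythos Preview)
Your strategy matches the paper's: case-split on whether $2\pi i\in\mathrm{span}_{\Q}({\bf c})$, specialise to $\oQ$, and finish with the height chain $a_{3}|{\bf n}|_{1}\leq\wht(\exp({\bf n}\cdot{\bf c}))\leq a_{4}\,\wht(\sigma({\bf n}\cdot{\bf c}))\leq a_{4}a_{1}\log|{\bf n}|_{1}$. There is, however, one real gap in your handling of the degenerate case. Specialising $2\pi i\mapsto 0$ and passing to a complementary basis ${\bf c}'$ only controls the tail ${\bf n}'$ of a solution ${\bf n}=(n_{0})^{\frown}{\bf n}'$; the first coordinate $n_{0}$ is invisible to the specialised equation (since $\sigma(2\pi i\,n_{0})=0$ and $\exp(2\pi i\,n_{0})=1$), so ``reducing to the non-degenerate case'' does not by itself finish. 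The paper recovers $n_{0}$ separately: for each of the finitely many ${\bf n}'$, the \emph{original} equation reads $r_{m}(2\pi i\,n_{0}+{\bf n}'\cdot{\bf c}',\exp({\bf n}'\cdot{\bf c}'))=0$, a non-trivial polynomial in $n_{0}$ because $\partial r_{m}/\partial x\neq 0$. You should also allow for the specialised polynomial $r_{m}^{\sigma}$ to drop into $\Cb[x]$ or $\Cb[y]$; the paper disposes of that subcase directly via the $\Q$-linear (resp.\ multiplicative) independence of ${\bf c}'$ (resp.\ $\exp({\bf c}')$).

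Two smaller remarks. In the non-degenerate case the paper does \emph{not} invoke Schanuel at all: any place $\sigma$ of the finitely generated field $K=\Q({\bf c},\exp({\bf c}),\text{coeffs of }r_{m})$ into $\oQ$ automatically fixes $\exp({\bf n}\cdot{\bf c})\in\oQ^{*}$, and one only needs to avoid the finitely many places at which some coefficient of $r_{m}$ or entry of ${\bf c}$ degenerates. So Schanuel is used solely to justify the $2\pi i\mapsto 0$ specialisation in the degenerate case. Relatedly, your worry about extending $c_{i}\mapsto\alpha_{i}$ past ``transcendentals inherited from ${\bf b}'$ and $\exp({\bf b}')$'' is misplaced here: the proposition is stated intrinsically in terms of $r_{m}$ and ${\bf c}$, with no reference to ${\bf b}'$, and the existence of such a place is standard for any finitely generated extension of $\Q$.
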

\begin{proof}
  We distinguish two cases. Let $K$ be a finitely generated field containing
  ${\bf c}$, $\exp({\bf c})$ and the coefficients of $r_{m}$.

  If $\mathrm{span}_{\Q}({\bf c})$ does not contain $2\pi i$, then
  $\exp({\bf c})$ is a multiplicatively independent set. We pick a
  specialisation $\sigma:K\to\oQ\cup\{\infty\}$ such that the coefficients of
  $r_{m}$ and of ${\bf c}$ become non-zero elements of $\oQ$.  Let
  $r_{m}^{\sigma}\in\oQ[x,y]$ be the specialisation of the polynomial
  $r_{m}$. Note that
  $\frac{\partial r_{m}^{\sigma}}{\partial x},\frac{\partial
    r_{m}^{\sigma}}{\partial y}\neq0$.  For any ${\bf n}$, if $\alpha$ is a root
  of $r_{m}^{\sigma}(x,\exp({\bf n}\cdot{\bf c}))$, then
  $\wht(\alpha)\leq a_{4}(r_{m}^{\sigma})\wht(\exp({\bf n}\cdot{\bf c}))$.
  Symmetrically, $\exp({\bf n}\cdot{\bf c})$ must be a root of
  $r_{m}^{\sigma}(\alpha,y)\in\oQ[y]\setminus\{0\}$, hence
  $\wht(\exp({\bf n}\cdot{\bf c}))\leq a_{4}(r_{m}^{\sigma})\wht(\alpha)$ as
  well.

  Now note that the specialisation $\sigma$ must leave
  $\exp({\bf n}\cdot{\bf c})\in\oQ^{*}$ fixed, so that if ${\bf n}$ is a
  solution of \eqref{eq:integer-alg}, then
  \[
    r_{m}^{\sigma}(\sigma({\bf n}\cdot{\bf c}),\exp({\bf n}\cdot{\bf c}))=0,
  \]
  hence
  \[
    \wht(\exp({\bf n}\cdot{\bf c}))\leq a_{4}(r_{m}^{\sigma})\wht(\sigma({\bf
      n}\cdot{\bf c}))\leq a_{4}(r_{m}^{\sigma})a_{1}({\bf c})\log|{\bf n}|_{1}.
  \]
  Since $\exp({\bf c})$ is multiplicatively independent, this implies that
  \[
    a_{3}(\exp({\bf c}))|{\bf n}|_{1}\leq\wht(\exp({\bf n}\cdot{\bf c}))\leq
    a_{4}(r_{m}^{\sigma})a_{1}({\bf c})\log|{\bf n}|_{1},
  \]
  and therefore that there are only finitely many such ${\bf n}$, as desired.

  If $\mathrm{span}_{\Q}({\bf c})$ contains $2\pi i$, we may assume, without
  loss of generality, that the first coordinate of ${\bf c}$ is $2\pi i/N$ for
  some integer $N$; after splitting \eqref{eq:integer-alg} into $N$ different
  equations, we may directly assume that the first coordinate of ${\bf c}$ is
  $2\pi i$ itself. Write ${\bf c}=(2\pi i)^{\frown}{\bf c}'$, where $\frown$
  indicates vector concatenation, so that ${\bf c'}$ is the vector containing
  all the entries of ${\bf c}$ except for the first one.

  Assuming Schanuel's Conjecture, we deduce that the entries of ${\bf c}$ are
  algebraically independent. Let $F$ be the field generated by ${\bf c}$,
  $\exp({\bf c})$ and the coefficients of $r_{m}$. Let $K'$ be the field
  generated by ${\bf c}'$ and $\exp({\bf c}')$ only. Since the entries of
  ${\bf c}$ are algebraically independent, $2\pi i\notin K'$. We can then easily
  find some $K\supseteq K'$ such that $F/K$ is a finitely generated geometric
  extension (i.e., $K$ is relatively algebraically closed in $F$) of
  transcendence degree one, while $2\pi i\notin K$.

  Let $\sigma:F\to K\cup\{\infty\}$ be a specialisation of $F$ which leaves $K$
  fixed and such that $\sigma(2\pi i)=0$. If we multiply $r_{m}$ by a suitable
  element of $F$, we may assume that the specialisation $r_{m}^{\sigma}$ is a
  non-zero polynomial in $K[x,y]$. Let ${\bf n}$ is an integer solution of
  \eqref{eq:integer-alg} written as ${\bf n}=(n_{0})^{\frown}{\bf n}'$, so that
  ${\bf n}\cdot{\bf c}=2\pi in_{0}+{\bf n}'\cdot{\bf c}'$.  Then
  \[
    \sigma(r_{m}({\bf n}\cdot{\bf c},\exp({\bf n}\cdot{\bf
      c})))=r_{m}^{\sigma}({\bf n}'\cdot{\bf c}',\exp({\bf n}'\cdot{\bf c}'))=0.
  \]
  Again, the specialisation leaves $\exp({\bf n}'\cdot{\bf c})\in\oQ^{*}$ fixed,
  while any mention of $n_{0}$ disappears since $\sigma(n_{0}\cdot2\pi i)=0$ and
  $\exp(n_{0}\cdot2\pi i)=1$. In this way, we have reduced to the case where
  $\mathrm{span}_{\Q}({\bf c}')$ does not contain $2\pi i$.

  If the polynomial $r_{m}^{\sigma}$ satisfies
  $\frac{\partial r_{m}^{\sigma}}{\partial x},\frac{\partial
    r_{m}^{\sigma}}{\partial y}\neq0$, we may reapply the previous argument with
  the heights and deduce that there are at most finitely many vectors
  ${\bf n}'$. Otherwise, if $r_{m}^{\sigma}$ actually lives in $\Cb[x]$ or
  $\Cb[y]$, then it has finitely many solutions because ${\bf c}'$ and
  $\exp({\bf c}')$ are respectively $\Q$-linearly independent and
  multiplicatively independent and $r_{m}^{\sigma}\neq0$. In any case, the
  vectors ${\bf n}'$ are at most finitely many.

  It remains to check $n_{0}$. It must be an integer solution of
  \[
    r_{m}(2\pi ix+{\bf n}'\cdot{\bf c}',\exp(2\pi ix+{\bf n}'\cdot{\bf
      c}'))=r_{m}(2\pi ix+{\bf n}'\cdot{\bf c}',\exp({\bf n}'\cdot{\bf c}'))=0.
  \]
  Since $r_{m}$ is irreducible in $\Cb[x,y]$ and
  $\frac{\partial r_{m}}{\partial x}\neq0$, the above equation is never trivial
  for any choice of ${\bf n}'$, and therefore it has only finitely many
  solutions for each possible ${\bf n}'$. Therefore, there are at most finitely
  many integer solutions ${\bf n}$, as desired.
\end{proof}

Chaining together all of the above statements, we obtain that if Schanuel's
conjecture holds, then \eqref{eq:rational} has only finitely many rational
solutions. This is a conditional version of \prettyref{thm:finite}; an
unconditional proof will be given in the appendix.

\begin{prop}
  \label{prop:finite-rational}Let $p(x,y)\in\Cb[x,y]$ be an irreducible
  polynomial such that
  $\frac{\partial p}{\partial x},\frac{\partial p}{\partial y}\neq0$ and
  ${\bf b}\in\Cb^{l}$ be a vector with $\Q$-linearly independent entries.

  If Schanuel's Conjecture holds, then \eqref{eq:rational} has only finitely
  many rational solutions.
\end{prop}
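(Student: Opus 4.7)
The plan is to chain together the three reduction steps developed in the preceding subsections, culminating in the finiteness result proved via heights. The statement is essentially a summary of the programme, so the work has already been done; the task is to verify that the reductions compose correctly and that Schanuel's Conjecture is invoked only where needed (namely in the last step).

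First, I would start from an arbitrary $\mathbf{b} \in \Cb^l$ with $\Q$-linearly independent entries. Applying \prettyref{prop:red-to-int}, there exists an integer $N > 0$ and $\mathbf{b}' := \mathbf{b}/N \in \Cb^l$ (still with $\Q$-linearly independent entries) such that the rational solutions of \eqref{eq:rational} correspond bijectively to the integer solutions of $p(\mathbf{x}\cdot\mathbf{b}', \exp(\mathbf{x}\cdot\mathbf{b}'))=0$, which is \eqref{eq:integer}. Note that this step is unconditional. Hence it suffices to show that \eqref{eq:integer} has only finitely many integer solutions.

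Next, I would apply \prettyref{prop:red-to-alg} to $\mathbf{b}'$. This yields finitely many irreducible polynomials $r_1,\dots,r_M \in \Cb[x,y]$, each with $\frac{\partial r_m}{\partial x}, \frac{\partial r_m}{\partial y} \neq 0$, together with a vector $\mathbf{c}$ of $\Q$-linearly independent entries satisfying $\exp(\mathbf{c}) \subset \overline{\Q}^*$, such that \eqref{eq:integer} has finitely many integer solutions if and only if each equation $r_m(\mathbf{x}\cdot\mathbf{c}, \exp(\mathbf{x}\cdot\mathbf{c}))=0$ (namely \eqref{eq:integer-alg}) has finitely many integer solutions. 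Again, this is unconditional.

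Finally, for each of the finitely many $r_m$, I would invoke \prettyref{prop:finite-alg}, which under Schanuel's Conjecture guarantees that \eqref{eq:integer-alg} has only finitely many integer solutions. The union of finitely many finite sets is finite, so \eqref{eq:integer} has only finitely many integer solutions, and, tracing back through the bijection of \prettyref{prop:red-to-int}, \eqref{eq:rational} has only finitely many rational solutions. The only step that relies on Schanuel's Conjecture is the last invocation of \prettyref{prop:finite-alg}, consistent with the remark in the introduction that the two intermediate reductions are unconditional. Since each reduction is already proved, the only thing to verify is that the hypotheses on $\mathbf{b}'$, $\mathbf{c}$ and the $r_m$ match up at each step; as the propositions are formulated precisely for this purpose, no obstacle remains.
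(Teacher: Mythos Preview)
Your proposal is correct and follows exactly the paper's own proof, which simply states that it suffices to apply \prettyref{prop:red-to-int}, \prettyref{prop:red-to-alg}, and \prettyref{prop:finite-alg} in sequence. You have merely spelled out the chaining in more detail, but the approach is identical.
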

\begin{proof}
  It suffices to apply \prettyref{prop:red-to-int}, \prettyref{prop:red-to-alg},
  and \prettyref{prop:finite-alg} in sequence.
\end{proof}

This is enough to prove the main theorem.

\begin{proof}[Proof of \prettyref{thm:main}]
  By \prettyref{cor:red-to-linear}, Schanuel's Conjecture implies that, given a
  finitely generated field $k$, the solutions of \eqref{eq:orig} in
  $\overline{k}$ are in bijection with the rational solutions of
  \eqref{eq:rational}, which are at most finitely many by
  \prettyref{prop:finite-rational}, again assuming Schanuel's
  conjecture. However, \eqref{eq:orig} has infinitely many solutions in $\Cb$,
  and therefore at least one is not in $\overline{k}$, as desired.
\end{proof}

\section{Appendix}

\begin{center}
By V.\ Mantova and U.\ Zannier
\end{center}

\bigskip

The purpose of this appendix is to give a proof of the conclusion of
\prettyref{prop:finite-alg} without assuming Schanuel's Conjecture.  Following
all the previous implications, this gives an unconditional proof of
\prettyref{thm:finite}. It is not surprising that we still use a deep theorem of
transcendental number theory due to Alan Baker, which is a special true case of
the conjecture.

We shall prove the following.

\begin{prop}
  \label{prop:finite-alg-uncond}Let $r_{m}(x,y)\in\Cb[x,y]$ be an irreducible
  polynomial such that
  $\frac{\partial r_{m}}{\partial x},\frac{\partial r_{m}}{\partial y}\neq0$ and
  ${\bf c}\in\Cb^{l}$ be a vector with $\Q$-linearly independent entries such
  that $\exp({\bf c})\subset\oQ^{*}$. Then \eqref{eq:integer-alg} has only
  finitely many integer solutions.
\end{prop}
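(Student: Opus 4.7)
The plan is to adapt the argument for \prettyref{prop:finite-alg}: the case $2\pi i \notin \mathrm{span}_{\Q}({\bf c})$ is already unconditional in the text, so the task is to deal with the remaining case $2\pi i \in \mathrm{span}_{\Q}({\bf c})$, replacing the appeal to Schanuel's Conjecture with Baker's theorem on linear forms in logarithms. As in that proof, we may assume the first entry of ${\bf c}$ is $2\pi i$ and write ${\bf c} = (2\pi i)^{\frown} {\bf c}'$, ${\bf n} = (n_{0})^{\frown} {\bf n}'$, so that the equation becomes
\[
  r_{m}(2\pi i n_{0} + {\bf n}' \cdot {\bf c}', \exp({\bf n}' \cdot {\bf c}')) = 0,
\]
with the exponential factor $\beta_{{\bf n}'} := \exp({\bf n}' \cdot {\bf c}') \in \oQ^{*}$ depending only on ${\bf n}'$. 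Since $r_{m}$ is irreducible and depends on $x$, for each fixed ${\bf n}'$ the resulting equation in $n_{0}$ has at most $\deg_{x} r_{m}$ solutions, so it is enough to bound $|{\bf n}'|_{1}$.

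Here is where Baker's theorem intervenes. Since ${\bf c}$ is a $\Q$-linearly independent tuple of logarithms of nonzero algebraic numbers (with $2\pi i = \log 1$), it follows that $1, 2\pi i, c'_{1}, \dots$ are $\oQ$-linearly independent; this is the unconditional replacement for the algebraic independence of ${\bf c}$ supplied by Schanuel. The plan is to exploit this to construct a specialisation $\sigma: F \to \oQ \cup \{\infty\}$ of the finitely generated field $F = \Q({\bf c}, \exp({\bf c}), \text{coefficients of } r_{m})$ such that $\sigma(2\pi i) = 0$, $\sigma$ fixes $\oQ \cap F$ (so in particular $\sigma(\beta_{{\bf n}'}) = \beta_{{\bf n}'}$), the image $\sigma({\bf c}')$ is $\Q$-linearly independent in $\oQ$, and, after multiplying $r_{m}$ by a suitable element of $F$ to clear denominators, $r_{m}^{\sigma} \in \oQ[x,y]$ is nonzero with $\partial r_{m}^{\sigma}/\partial x, \partial r_{m}^{\sigma}/\partial y \neq 0$.

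Once such a $\sigma$ is available, the specialised equation reads $r_{m}^{\sigma}({\bf n}' \cdot \sigma({\bf c}'), \beta_{{\bf n}'}) = 0$: crucially, the term $\sigma(2\pi i) n_{0}$ vanishes, eliminating the unbounded variable $n_{0}$ from the algebraic equation. The height argument from the case $2\pi i \notin \mathrm{span}_{\Q}({\bf c})$ then applies almost verbatim: combining $\wht(\beta_{{\bf n}'}) \geq a_{3}(\exp({\bf c}')) |{\bf n}'|_{1}$ (which follows from multiplicative independence of $\exp({\bf c}')$, itself a consequence of $\Q$-linear independence of ${\bf c}$) with the upper bound $\wht(\beta_{{\bf n}'}) \leq a_{4}(r_{m}^{\sigma}) a_{1}(\sigma({\bf c}')) \log|{\bf n}'|_{1}$ forces $|{\bf n}'|_{1}$ to be bounded, and the desired finiteness follows. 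The main obstacle is the construction of $\sigma$: while Baker's $\oQ$-linear independence immediately rules out the obvious linear obstructions to sending $2\pi i$ to $0$ while keeping $\sigma({\bf c}')$ finite and $\Q$-independent, higher-degree algebraic relations between $2\pi i$ and ${\bf c}'$ are not obviously excluded by its basic statement, and the delicate step is to show, using Baker (perhaps in a refined form), that such a specialisation can nonetheless be produced.
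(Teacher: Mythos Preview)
Your proposal correctly identifies that the case $2\pi i \notin \mathrm{span}_{\Q}({\bf c})$ is already unconditional, and that the remaining task is to eliminate $2\pi i$ without appealing to algebraic independence. However, the approach you outline --- constructing a single specialisation $\sigma$ to $\oQ$ with $\sigma(2\pi i)=0$, $\sigma$ fixing $\oQ\cap F$, and $\sigma({\bf c}')$ still $\Q$-linearly independent --- has a genuine gap, and you have in fact located it yourself. Baker's theorem gives only the $\oQ$-\emph{linear} independence of the entries of ${\bf c}$; it says nothing about higher-degree algebraic relations among $2\pi i$ and the entries of ${\bf c}'$. There is no known way to upgrade this to algebraic independence (that would be a substantial case of Schanuel's conjecture), and without it there is no reason why a specialisation satisfying all of your constraints should exist. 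The phrase ``perhaps in a refined form'' is wishful: no refinement of Baker currently available would suffice here, so the proposal is not a proof but a restatement of the difficulty.

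The paper's argument avoids this obstruction by a genuinely different mechanism. Rather than specialising the coefficient field down to $\oQ$, it introduces a \emph{function field} $E/\oQ$ (a finite extension of $\oQ(y)$, with $y$ playing the role of $\exp({\bf x}\cdot{\bf c})$) and works in the compositum $KE$, where $K$ is generated over $\oQ$ by ${\bf c}$ and the coefficients of $r_m$. A Bertini--Noether type result (\prettyref{prop:Bertini}) shows that if $r_m(z,y)$ acquires a linear factor $z - {\bf n}_P\cdot{\bf c}$ at infinitely many specialisations $y \mapsto \exp({\bf n}_P\cdot{\bf c})$, then it already has a linear factor $z - \phi$ with $\phi \in KE$. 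Now Baker's $\oQ$-linear independence of ${\bf c}$, together with the linear disjointness of $K$ and $E$ over $\oQ$ and \prettyref{prop:lin-indep}, forces $\phi = \boldsymbol{\psi}\cdot{\bf c}$ with $\boldsymbol{\psi}$ a vector of functions in $E$: the coordinate functions $\psi_j$ are algebraic over $\oQ(y)$ and take the integer values $n_{j,P}$ at the relevant points. The height argument is then applied not to a single specialised polynomial $r_m^{\sigma}$ but to the relations $f(y,\psi_j)=0$ coming from the algebraicity of each $\psi_j$ over $\oQ(y)$, which bounds $|{\bf n}'_P|_1$. In short, the paper replaces your sought-for specialisation of ${\bf c}$ by a linear-algebra decomposition over a function field, so that Baker's $\oQ$-linear independence is precisely the input required and nothing stronger.
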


Informally, the idea is to consider \eqref{eq:integer-alg} as an equation
between algebraic functions
\[
  {\bf x}\cdot{\bf c}=\psi(\exp({\bf x}\cdot{\bf c})),
\]
where $\psi$ is an element of $\overline{\Cb(y)}$ such that $r_{m}(\psi,y)=0$.
We then exploit the algebraic properties of ${\bf c}$ to show that the entries
of ${\bf x}$ are actually themselves algebraic functions of
$\exp({\bf x}\cdot{\bf c})$. Once we know this, we can look at the heights of
the functions to deduce the finiteness similarly to the proof of
\prettyref{prop:finite-alg}.

In place of Schanuel's Conjecture, we use the following deep theorem of
transcendental number theory by Alan Baker.

\begin{thm}[A.\ Baker \cite{Baker1966,Baker1967,Baker1967a}]
  \label{thm:Baker}If $\alpha_{1},\dots,\alpha_{n}\in\log(\oQ^{*})$ are
  $\Q$-linearly independent, then they are also $\oQ$-linearly independent.
\end{thm}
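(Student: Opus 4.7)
The plan is to prove Baker's theorem by contradiction, following the Gel'fond--Baker method of auxiliary functions. Suppose $\alpha_1, \ldots, \alpha_n \in \log(\oQ^*)$ are $\Q$-linearly independent, and that nonetheless there exist $\beta_1, \ldots, \beta_n \in \oQ$, not all zero, with $\beta_1 \alpha_1 + \cdots + \beta_n \alpha_n = 0$. After reindexing I may assume $\beta_n = -1$, so that $\alpha_n = \beta_1 \alpha_1 + \cdots + \beta_{n-1} \alpha_{n-1}$; writing $\alpha_i = \log \gamma_i$ with $\gamma_i \in \oQ^*$, this translates into a relation $\gamma_n = \gamma_1^{\beta_1} \cdots \gamma_{n-1}^{\beta_{n-1}}$ on the chosen branches.

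Let $K$ be a number field containing all the $\gamma_i$ and $\beta_j$. For large integer parameters $L$ and $h$, to be balanced later, I construct via Siegel's lemma an auxiliary entire function of the shape
\[
  \Phi(z_1, \ldots, z_{n-1}) = \sum_{\lambda_0 = 0}^{L} \cdots \sum_{\lambda_{n-1} = 0}^{L} p(\lambda_0, \ldots, \lambda_{n-1})\, \gamma_1^{(\lambda_1 + \lambda_0 \beta_1) z_1} \cdots \gamma_{n-1}^{(\lambda_{n-1} + \lambda_0 \beta_{n-1}) z_{n-1}}
\]
with coefficients $p(\lambda_0, \ldots, \lambda_{n-1}) \in \Oc_K$ not all zero and of controlled height, chosen so that $\Phi$ together with all its partial derivatives of total order at most $h$ vanishes at every point of $\{1, \ldots, h\}^{n-1}$. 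The crucial use of the hypothetical relation is that one may rewrite occurrences of $\gamma_n^{m}$ in terms of the remaining $\gamma_i$, so the sum actually involves only $n-1$ variables. Siegel's lemma ensures this construction is possible as soon as $L^n$ exceeds the number of linear constraints by a sufficient factor, which fixes $L$ to be of order roughly $h$.

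The heart of the proof is an analytic--arithmetic \emph{extrapolation}. Using Schwarz's lemma on polydiscs and Cauchy's integral formula, one shows that any derivative value $\Phi^{(\boldsymbol{k})}(m_1, \ldots, m_{n-1})$ at an integer point outside the original grid is extremely small in absolute value. At the same time, after clearing a controlled denominator, this value is an algebraic integer in $K$ whose conjugates are bounded explicitly in terms of $L$, $h$, and the heights of the data. A Liouville-type lower bound on the absolute norm of a nonzero element of $\Oc_K$ then forces the value to vanish, so $\Phi$ in fact vanishes to high order on a substantially larger grid. Iterating this extrapolation $O(\log h)$ times yields so many zeros that a Vandermonde-type zero estimate --- and this is where the $\Q$-linear independence of $\alpha_1, \ldots, \alpha_n$ enters decisively, guaranteeing that the relevant generalised Vandermonde determinant is nonzero --- forces every coefficient $p(\lambda_0, \ldots, \lambda_{n-1})$ to vanish, contradicting the construction.

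The main obstacle is the quantitative bookkeeping in the extrapolation: one must simultaneously control the order of vanishing, the heights of the coefficients (growing with $L$), the archimedean size of $\Phi$ on ever-larger polydiscs, and the denominators introduced by each differentiation, tuning $L$ and $h$ so that after the requisite number of iterations the Liouville lower bound genuinely beats the analytic upper bound. It is precisely this delicate balance, together with the appeal to $\Q$-linear independence in the final zero estimate, that distinguishes Baker's theorem from the earlier Gel'fond--Schneider result (which handles only $n=2$ with $\beta_1 \in \oQ \setminus \Q$) and accounts for its depth.
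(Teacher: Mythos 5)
This statement is not proved in the paper at all: it is Baker's theorem, imported as a black box from Baker's 1966--67 papers, and the entire point of the appendix is to use it as a true, already-established substitute for Schanuel's conjecture. Your proposal therefore attempts something the author deliberately does not attempt --- a proof of one of the deepest results in transcendence theory --- and what you have written is an outline of Baker's method rather than a proof. The two genuinely hard steps, namely the extrapolation (the Schwarz-lemma upper bound against the Liouville lower bound, with denominators and heights controlled uniformly through every iteration) and the final zero estimate (the non-vanishing of the generalised Vandermonde determinant, itself a non-trivial lemma and exactly the place where the $\Q$-linear independence of $\alpha_{1},\dots,\alpha_{n}$ is used), are asserted rather than carried out; your closing paragraph concedes as much. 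An outline at this level of detail cannot stand as a proof of the statement.

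There is also a concrete error in the one quantitative claim you do make. You impose vanishing of $\Phi$ together with all derivatives of total order at most $h$ at every point of the full grid $\{1,\dots,h\}^{n-1}$; that is on the order of $h^{2(n-1)}$ linear conditions against only $(L+1)^{n}$ unknown coefficients, so Siegel's lemma cannot produce a non-trivial solution with $L$ of order $h$ once $n\geq3$. Baker's construction imposes vanishing only at the diagonal points $(l,l,\dots,l)$ for $1\leq l\leq h$ and extrapolates along that one-parameter family; replacing the grid by the diagonal is precisely the innovation that makes the many-variable case work and separates Baker's theorem from the Gel'fond--Schneider case $n=2$ (where there is only one variable and the distinction disappears). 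For the purposes of this paper you should simply cite Theorem~\ref{thm:Baker} to Baker's papers, as the author does.
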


It is easy to see that this is one of the consequences of Schanuel's
Conjecture. In our case, it shows that our vector ${\bf c}$ is actually
$\oQ$-linearly independent.

Before going on with the proof of \prettyref{prop:finite-alg-uncond}, we recall
a couple of classical results about function fields.

\begin{notation}
  Let ${\bf x},y$ be independent variables over $\oQ$. Let $K$ be a finite
  extension of $\oQ({\bf x})$ and $E$ be a finite extension of $\oQ(y)$. We look
  at $E$ as the function field of a normal, projective and irreducible curve
  $\mathcal{D}$ over $\oQ$. We call $KE$ their compositum inside an algebraic
  closure of $\Q({\bf x},y)$.
\end{notation}

Recall that $KE$ can be seen as the quotient field of the ring of the finite
sums $a_{1}b_{1}+\dots+a_{m}b_{m}$, with $a_{i}\in K$ and $b_{i}\in E$; the ring
itself, both as an $E$-vector space and as a $K$-vector space, is isomorphic to
$E\otimes_{\oQ}K$ because $K$ and $E$ are linearly disjoint over
$\oQ$. Moreover, each specialisation from $E/\oQ$ to $\oQ$ extends (uniquely) to
a specialisation from $KE$ to $K$ that leaves the elements of $K$ fixed.

Let $f(z)\in KE[z]$, where $z$ is a further independent variable.  For all
$P\in\mathcal{D}(\oQ)$ except for at most finitely many points, if $\Oc_{P}$ is
the local ring of $\mathcal{D}(\oQ)$ at $P$, and $\Oc_{P}^{K}$ the local ring of
$\mathcal{D}(K)$ at $P$, we have that $f\in\Oc_{P}^{K}[z]$; it suffices to avoid
the (finitely many) poles of the coefficients of $f$. When $f\in\Oc_{P}^{K}[z]$,
we shall call $f_{P}\in K[z]$ the specialisation of $f$ at $P$.

\begin{prop}
  \label{prop:Bertini}Let $f\in KE[z]$ be a monic polynomial irreducible in
  $K\overline{E}[z]$. Then the polynomial $f_{P}$ is well-defined and
  irreducible in $K[z]$ for all $P\in\mathcal{D}(\oQ)$ except at most finitely
  many points.
\end{prop}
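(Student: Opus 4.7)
The proposition is a function-field Bertini/Hilbert-irreducibility statement; I plan a Galois-theoretic specialisation argument.

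First, I would handle well-definedness. Multiplying $f$ by a suitable nonzero element of $E$, the coefficients of $f$ become regular outside some finite set $S_{0}\subset\mathcal{D}(\oQ)$. Enlarging $S_{0}$ to a finite $S$ by adjoining the zeros of the leading coefficient of $f$ and of the discriminant $\mathrm{disc}_{z}(f)\in KE$ (which is nonzero because $f$ is irreducible in characteristic zero, hence separable), the specialisation $f_{P}\in K[z]$ is a well-defined monic separable polynomial of degree $d:=\deg_{z}(f)$ for every $P\in\mathcal{D}(\oQ)\setminus S$.

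Next, I would set up the Galois-theoretic framework. Let $L/KE$ be a Galois closure of $F:=KE[z]/(f)$, with Galois group $G$; the $d$ roots of $f$ form a transitive $G$-set $G/H_{\alpha}$. The hypothesis that $f$ is irreducible in $K\overline{E}[z]$ translates, via linear disjointness, to the statement that the subgroup $G':=\mathrm{Gal}(L/(L\cap K\overline{E}))$ still acts transitively on $G/H_{\alpha}$. Geometrically, $L$ corresponds to a finite Galois cover $\tilde{Y}\to Y:=(\mathcal{D}\setminus S)_{K}$ étale outside a finite branch locus, the intermediate field $L\cap K\overline{E}$ corresponds to the maximal sub-cover $\tilde{Y}/G'\to Y$ that descends to an $\oQ$-cover of $\mathcal{D}$ pulled back to $K$, and the residual cover $\tilde{Y}\to\tilde{Y}/G'$ is the ``genuinely $K$-dependent'' part of the extension.

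For the conclusion, I would argue as follows. For $P\in\mathcal{D}(\oQ)$ outside a finite enlargement of $S$ containing the branch locus, the factorisation of $f_{P}$ in $K[z]$ is governed by the orbits on $G/H_{\alpha}$ of the monodromy image of $\mathrm{Gal}(\overline{K}/K)$ in $G$; since the geometric sub-cover has its preimages of $P$ defined over $\oQ\subset K$, this monodromy image is automatically contained in $G'$. The key claim is that for cofinitely many $P\in\mathcal{D}(\oQ)$, this image equals all of $G'$, which combined with the hypothesis (transitivity of $G'$) yields irreducibility of $f_{P}$ in $K[z]$.

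The main obstacle will be the key claim above. The intuition is that $\tilde{Y}\to\tilde{Y}/G'$ is a Galois $G'$-cover that does not descend to $\oQ$, so the locus of $P$ where its monodromy collapses corresponds to coincidental algebraic relations between $P$ and the coefficients of $f$ over $K$; these relations cut out a proper closed subset of $\mathcal{D}$, which is finite because $\mathcal{D}$ is a curve. Making this rigorous requires a careful version of function-field Hilbert irreducibility at $\oQ$-points of an $\oQ$-curve, leveraging the structural fact that $G'$ is precisely the ``purely arithmetic'' part of the cover, invisible on the $\oQ$-curve $\mathcal{D}$ itself.
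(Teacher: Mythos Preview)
Your Galois-theoretic set-up is correct: the translation of ``$f$ irreducible over $K\overline{E}$'' into transitivity of $G'=\mathrm{Gal}(L/(L\cap K\overline{E}))$ on the roots is right, and your argument that the decomposition group $D_{P}$ at any unramified $\oQ$-point lands inside $G'$ is fine (because $L\cap K\overline{E}=KE'$ for a finite $E'/E$, and every point of the corresponding $\oQ$-curve above $P$ is again $\oQ$-rational, so that sub-cover contributes nothing to $D_{P}$). The problem is that everything hinges on your ``key claim'' that $D_{P}=G'$ for cofinitely many $P$, and this you do not prove. If you try to prove it, you will see that it is \emph{exactly} the statement of the proposition applied to the minimal polynomial over $KE'$ of a primitive element of $L/KE'$: that polynomial is monic, irreducible in $K\overline{E}[z]$ by construction of $G'$, and $D_{P}=G'$ is equivalent to its specialisation staying irreducible over $K$. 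So the Galois reformulation has restated the problem rather than reduced it; the intuition that the bad locus ``cuts out a proper closed subset of $\mathcal{D}$'' is precisely what has to be shown, and it is \emph{not} an automatic closedness argument on $\mathcal{D}_{K}$ (reducibility of a fibre over a $K$-point is not a Zariski-closed condition on the base).

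The paper's proof avoids this circularity by a different choice of primitive element. Instead of working over $KE$, it takes a primitive element $\beta$ of $F=KE(\alpha)$ over $E(\mathbf{x})$, obtaining a polynomial $g(\mathbf{x},z)\in E[\mathbf{x},z]$ in \emph{several} variables over the one-variable function field $E$. The hypothesis on $f$ translates into $[F:E(\mathbf{x})]=[F\overline{E}:\overline{E}(\mathbf{x})]$, i.e.\ $g$ is absolutely irreducible over $E$. At this point the classical Bertini--Noether theorem (e.g.\ \cite[Prop.~9.4.3]{Fried2008}) applies directly and gives absolute irreducibility of $g_{P}$ for all but finitely many $P$; one then reads off $[K(\sigma(\alpha)):K]=[F:KE]$, hence irreducibility of $f_{P}$. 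The point is that Bertini--Noether is a statement about specialising \emph{absolute} irreducibility of multivariate polynomials, so one must arrange for the variables $\mathbf{x}$ to appear as polynomial variables rather than be absorbed into the coefficient field $K$. Your monodromy framework keeps $K$ as a black box and therefore cannot access this mechanism.
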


\begin{rem}
  In the case when $K=\oQ(x)$, the result is a special case of a well known
  theorem usually denominated ``Bertini-Noether theorem'' \cite[Prop.\
  9.4.3]{Fried2008}.  We have not been able to locate in the literature this
  slightly more general version, so we provide a proof by reduction to the
  Bertini-Noether theorem.
\end{rem}
\begin{proof}
  The polynomial $f_{P}$ is well-defined as long as we choose
  $P\in\mathcal{D}(\oQ)$ outside the poles of the coefficients of $f$, which are
  finitely many. Therefore, we may assume that $f_{P}$ is well-defined.

  Let $F$ be the field extension of $KE$ generated by a root $\alpha$ of
  $f$. Choose a primitive element $\beta$ of $F/E({\bf x})$ and let
  $g({\bf x},z)\in E({\bf x})[z]$ be its minimal polynomial over $E({\bf
    x})$. Without loss of generality, we may assume that $g$ is actually a monic
  irreducible polynomial in $E[{\bf x},z]$.

  Since $K$ is linearly disjoint from $E$ over $\oQ$, we have
  $[K:\oQ({\bf x})]=[KE:E({\bf x})]=[K\overline{E}:\overline{E}({\bf x})]$;
  moreover, since $f$ is irreducible over $K\overline{E}$, we have
  $[F:KE]=[F\overline{E}:K\overline{E}]$. In particular, we have
  \[
    [F:E({\bf x})]=[F:KE]\cdot[KE:E({\bf
      x})]=[F\overline{E}:K\overline{E}]\cdot[K\overline{E}:\overline{E}({\bf
      x})]=[F\overline{E}:\overline{E}({\bf x})].
  \]

  This implies that the polynomial $g$ is absolutely irreducible as a polynomial
  in several variables over $E$. By the Bertini-Noether theorem, in the form
  \cite[Prop.\ 9.4.3]{Fried2008}, for all the points $P\in\mathcal{D}(\oQ)$
  except at most finitely many ones, $g_{P}$ is well-defined (i.e.,
  $g\in\Oc_{P}[{\bf x},z]$) and absolutely irreducible.

  Now take a $P$ such that $g_{P}$ is absolutely irreducible, let $\Oc_{P}^{F}$
  be an extension of $\Oc_{P}^{K}$ to a valuation subring of $F$ and let
  $\sigma:\Oc_{P}^{F}\to\overline{K}$ be the corresponding specialisation which
  extends the specialisation at $P$. We note that both $\alpha$ and $\beta$ are
  in $\Oc_{P}^{F}$, and except for at most finitely many choices of $P$,
  $\oQ({\bf x},\sigma(\beta))=K(\sigma(\alpha))$.  Since
  $g_{P}(\sigma(\beta))=0$, we have that
  $[K(\sigma(\alpha)):\oQ({\bf x})]=[F:E({\bf x})]$.

  Therefore, the degrees of the subextensions are preserved as well.  In
  particular, $[K(\sigma(\alpha)):K]=[F:KE]$. Since $f_{P}(\sigma(\alpha))=0$,
  this shows that $f_{P}$ must be irreducible for all but finitely many
  $P\in\mathcal{D}(\oQ)$, as desired.
\end{proof}

Now, let $\psi_{1},\dots,\psi_{m}$ be functions in $KE$. As before, we may
define their specialisations $\psi_{j}^{P}$ for all $P\in\mathcal{D}(\oQ)$
except at most finitely many points.

\begin{prop}
  \label{prop:lin-indep}If $\psi_{1},\dots,\psi_{m}\in KE$ are $E$-linearly
  independent, then for all $P\in\mathcal{D}(\oQ)$ except at most finitely many
  points, the specialisations $\psi_{1}^{P},\dots,\psi_{n}^{P}$ are well-defined
  and $\oQ$-linearly independent.
\end{prop}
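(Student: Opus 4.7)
My plan is to translate the $E$-linear independence of $\psi_1,\dots,\psi_m$ into the non-vanishing of an explicit $m\times m$ minor with entries in $E$, and then observe that this minor survives the specialisation $\sigma_P$ at generic $P\in\mathcal{D}(\oQ)$.

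Since $K$ and $E$ are linearly disjoint over $\oQ$, any $\oQ$-basis $\{\omega_\alpha\}_{\alpha\in A}$ of $K$ is simultaneously an $E$-basis of the subring $R := K\otimes_\oQ E \subseteq KE$, and $KE$ is the fraction field of $R$. Picking a common denominator $v\in R\setminus\{0\}$ with $v\psi_j\in R$ for every $j$, I would expand uniquely
\[ v\psi_j = \sum_{\alpha\in A} a_{j\alpha}\,\omega_\alpha, \qquad a_{j\alpha}\in E, \]
with only finitely many nonzero coefficients. Multiplication by the nonzero $v\in KE$ preserves $E$-linear (in)dependence, so the $m$ coordinate vectors $(a_{j\alpha})_\alpha$ are $E$-linearly independent in $E^{(A)}$. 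Equivalently, the coefficient matrix $(a_{j\alpha})_{j,\alpha}$ has rank $m$ over $E$, and there exist columns $\alpha_1,\dots,\alpha_m$ with
\[ D := \det(a_{j,\alpha_\ell})_{1\le j,\ell\le m} \in E\setminus\{0\}. \]

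For every $P\in\mathcal{D}(\oQ)$ outside a finite exceptional set, the elements $v$, $v^{-1}$ and all the $a_{j\alpha}$ lie in $\Oc_P^K$, we have $D^P\neq 0$, and $\sigma_P(v)\in K^*$. For such $P$ the specialisation $\sigma_P(v\psi_j) = \sum_\alpha a_{j\alpha}^P\,\omega_\alpha$ is well defined in $K$, and the corresponding minor of the specialised matrix is $D^P\neq 0$; hence the rows $(a_{j\alpha}^P)_\alpha$ are $\oQ$-linearly independent, and since $\{\omega_\alpha\}$ remains $\oQ$-linearly independent in $K$, so are the elements $\sigma_P(v\psi_j)$. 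Dividing by the nonzero scalar $\sigma_P(v)\in K^*$ preserves $\oQ$-linear relations, yielding the desired $\oQ$-linear independence of $\psi_1^P,\dots,\psi_m^P$.

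The only conceptually non-trivial point is the translation of $E$-linear independence into the non-vanishing of a minor over $E$; once this is in place, the specialisation step is routine. I expect the only mildly fiddly issue to be the bookkeeping of denominators in $KE$, handled uniformly by the auxiliary element $v$, but this presents no real obstacle.
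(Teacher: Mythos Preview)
Your proposal is correct and follows essentially the same approach as the paper: clear denominators, expand the $\psi_j$ over a $\oQ$-linearly independent subset of $K$ (which by linear disjointness is also $E$-linearly independent), translate $E$-linear independence into a rank-$m$ condition on the resulting matrix over $E$, and observe that a nonzero minor vanishes at only finitely many $P$. The only cosmetic differences are that the paper picks a finite $\oQ$-linearly independent set $d_1,\dots,d_l\in K$ rather than a full basis $\{\omega_\alpha\}$, and absorbs the denominator at the outset rather than tracking it via your auxiliary element $v$.
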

\begin{proof}
  Let $\psi_{1}$, $\dots$, $\psi_{m}$ be functions as in the hypothesis.  Since
  $KE$ is the quotient field of the finite sums $a_{1}b_{1}+\dots+a_{k}b_{k}$
  with $a_{i}\in K$ and $b_{i}\in E$, after multiplying by a common denominator
  we may assume that each function $\psi_{j}$ is of the form
  \[
    \psi_{j}=a_{1}b_{1}+\dots+a_{k}b_{k}
  \]
  with $a_{i}\in K$ and $b_{i}\in E$. In particular, we may find a
  $\oQ$-linearly independent set $d_{1},\dots,d_{l}\in K$ and coefficients
  $b_{jk}\in E$ such that
  \[
    \psi_{j}=\sum_{k=1}^{l}d_{k}b_{jk}.
  \]

  Since $d_{1}$, $\dots$, $d_{l}$ are $\oQ$-linearly independent, they are also
  $E$-linearly independent, as $E$ and $K$ are linearly disjoint. The fact that
  the functions $\psi_{j}$ are $E$-linearly independent translates to the fact
  that the matrix $(b_{jk})_{j,k}$ must have rank $m$. But then the matrix
  $(b_{jk}^{P})_{j,k}$ is well-defined and has rank $m$ for all but finitely
  many points, in which case the functions $\psi_{j}^{P}$ are $\oQ$-linearly
  independent, as desired.
\end{proof}

Now we are able to prove \prettyref{prop:finite-alg-uncond}.

\begin{proof}[Proof of \prettyref{prop:finite-alg-uncond}]
  If $2\pi i\notin\mathrm{span}_{\Q}({\bf c})$, the argument in the proof of
  \prettyref{prop:finite-alg} already shows that the desired result holds
  unconditionally, so we may assume that $2\pi i\in\mathrm{span}_{\Q}({\bf c})$.
  As in the previous proof, up to replacing $r_{m}$ with finitely many
  polynomials of the same shape, we may assume that the first coordinate of
  ${\bf c}$ is $2\pi i$. We write again ${\bf c}=(2\pi i)^{\frown}{\bf c}'$.

  Suppose first that the solutions ${\bf n}=(n_{0})^{\frown}{\bf n}'$ are such
  that the vectors ${\bf n}'$ are only finitely many. As in the proof of
  \prettyref{prop:finite-alg}, for each such given ${\bf n}'$,
  \eqref{eq:integer-alg} becomes a polynomial in $n_{0}$, so that the solutions
  are only finitely many, as desired.

  Now assume by contradiction that the vectors ${\bf n}'$ are infinitely
  many. We also have that
  $\exp({\bf n}\cdot{\bf c})=\exp({\bf n}'\cdot{\bf c}')$ takes infinitely many
  different values on the solutions, since the entries of $\exp({\bf c}')$ are
  multiplicatively independent.

  Let $K$ be the field generated by $\oQ$, ${\bf c}$ and the coefficients of
  $r_{m}$. By Baker's Theorem \ref{thm:Baker}, the entries of ${\bf c}$ are
  $\oQ$-linearly independent. Let $z$ and $y$ be new elements algebraically
  independent over $K$, and look at $r_{m}(z,y)$ as a polynomial in $z$ with
  coefficients in $K(y)$. After taking a finite extension $E/\oQ(y)$, we may
  split $r_{m}(z,y)$ into finitely many factors that are irreducible in
  $K\overline{E}[z]$. Let $\mathcal{D}$ a normal, irreducible and projective
  curve over $\oQ$ whose function field is $E$.

  If ${\bf n}$ is a solution of \eqref{eq:integer-alg}, there is a point
  $P\in\mathcal{D}(\oQ)$ such that $r_{m}(z,y_{P})$ has a linear factor of the
  form $(z-{\bf n}\cdot{\bf c})$ and $y_{P}=\exp({\bf n}\cdot{\bf c})$.  Since
  we are assuming that $\exp({\bf n}\cdot{\bf c})=\exp({\bf n}'\cdot{\bf c})$
  takes infinitely many values as ${\bf n}$ varies on the solutions, there are
  infinitely many points such that $r_{m}$ has a linear factor.  By
  \prettyref{prop:Bertini}, except for at most finitely many of these points,
  each such linear factor must be the specialisation of a linear factor of
  $r_{m}(z,y)$ over $KE$.

  In particular, there must be a function $\phi\in KE$ such that $(z-\phi)$ is a
  linear factor of $r_{m}(z,y)$, and such that for infinitely many points
  $P\in\mathcal{D}(\oQ)$ there is a solution ${\bf n}_{P}$ of
  \eqref{eq:integer-alg} such that
  \[
    y_{P}=\exp({\bf n}_{P}\cdot{\bf c}),\quad\phi_{P}={\bf n}_{P}\cdot{\bf c}.
  \]
  As before, write ${\bf n}_{P}=(n_{0,P})^{\frown}{\bf n}_{P}'$. Let
  $\mathcal{P}$ be the (infinite) set of such points $P$; without loss of
  generality, we may further assume that if $P\neq Q\in\mathcal{P}$, then
  ${\bf n}_{P}'\neq{\bf n}_{Q}'$. Since $K$ and $E$ are linearly disjoint over
  $\oQ$, ${\bf c}$ is $E$-linearly independent. Since $\phi_{P}$ is
  $\oQ$-linearly dependent on ${\bf c}$ for all $P\in\mathcal{P}$,
  \prettyref{prop:lin-indep} implies that $\phi$ is $E$-linearly dependent on
  ${\bf c}$, so that
  \[
    \phi=\boldsymbol{\psi}\cdot{\bf c}
  \]
  for some vector $\boldsymbol{\psi}$ of algebraic functions in $E$.  Moreover,
  when $\phi_{P}={\bf n}_{P}\cdot{\bf c}$ we must have
  $\boldsymbol{\psi}_{P}={\bf n}_{P}$.  Let us write
  $\boldsymbol{\psi}=(\psi_{0})^{\frown}\boldsymbol{\psi}'$.

  We now use the logarithmic Weil height again, as in
  \prettyref{sub:finiteness}.  Let $\psi$ be an entry of
  $\boldsymbol{\psi}'$. If $\psi$ is non-constant, we have that $y$ is algebraic
  over $\oQ(\psi)$, hence $f(y,\psi)=0$ for some $f\in\oQ[z,w]$ such that
  $\frac{\partial f}{\partial z}\neq0$.  In particular, $f(y_{P},\psi_{P})=0$,
  hence $\wht(y_{P})\leq a_{4}(f)\wht(\psi_{P})$.  Therefore, for each
  $P\in\mathcal{P}$ we have
  \[
    \wht(y_{P})=\wht(\exp({\bf n}_{P}\cdot{\bf c}))=\wht(\exp({\bf
      n}_{P}'\cdot{\bf c}'))\geq a_{3}(\exp({\bf c}'))|{\bf n}_{P}'|_{1},
  \]
  while we also have
  \[
    h(y_{P})\leq a_{4}(f)\wht(\psi_{P})=a_{4}(f)\log|\psi_{P}|\leq a_{4}(f)\log|{\bf n}_{P}'|_{1}.
  \]

  This implies that the range of $\psi$ as $P$ varies in $\mathcal{P}$ is
  finite. Since $\mathcal{P}$ is infinite, this implies that $\psi$ is actually
  constant. Since this must be true for any entry of $\boldsymbol{\psi}'$, the
  vector $\boldsymbol{\psi}'$ itself is constant, which implies that the vectors
  ${\bf n}_{P}'$ are only finitely many, and in particular that $\mathcal{P}$ is
  finite, a contradiction.
\end{proof}

The above proposition now yields the unconditional proof of
\prettyref{thm:finite}.

\begin{proof}[Proof of \prettyref{thm:finite}]
  It suffices to apply \prettyref{prop:red-to-int}, \prettyref{prop:red-to-alg},
  and \prettyref{prop:finite-alg-uncond} in sequence.
\end{proof}


\begin{thebibliography}{Bak67b}

\bibitem[Bak66]{Baker1966}
Alan Baker.
\newblock {Linear forms in the logarithms of algebraic numbers}.
\newblock {\em Mathematika}, 13(02):204--216, February 1966.
\newblock \href {http://dx.doi.org/10.1112/S0025579300003971}
  {\path{doi:10.1112/S0025579300003971}}.

\bibitem[Bak67a]{Baker1967}
Alan Baker.
\newblock {Linear forms in the logarithms of algebraic numbers (II)}.
\newblock {\em Mathematika}, 14(01):102--107, February 1967.
\newblock \href {http://dx.doi.org/10.1112/S0025579300008068}
  {\path{doi:10.1112/S0025579300008068}}.

\bibitem[Bak67b]{Baker1967a}
Alan Baker.
\newblock {Linear forms in the logarithms of algebraic numbers (III)}.
\newblock {\em Mathematika}, 14(02):220--228, February 1967.
\newblock \href {http://dx.doi.org/10.1112/S0025579300003843}
  {\path{doi:10.1112/S0025579300003843}}.

\bibitem[FJ08]{Fried2008}
Michael~D. Fried and Moshe Jarden.
\newblock {\em {Field Arithmetic}}, volume~11 of {\em Ergebnisse der Mathematik
  und ihrer Grenzgebiete. 3. Folge / A Series of Modern Surveys in
  Mathematics}.
\newblock Springer, Berlin, Heidelberg, 2008.
\newblock \href {http://dx.doi.org/10.1007/978-3-540-77270-5}
  {\path{doi:10.1007/978-3-540-77270-5}}.

\bibitem[G\"{u}12]{Gunaydin2011}
Ayhan G\"{u}naydin.
\newblock {Rational solutions of polynomial-exponential equations}.
\newblock {\em International Journal of Number Theory}, 08(06):1391--1399,
  September 2012.
\newblock \href {http://dx.doi.org/10.1142/S1793042112500820}
  {\path{doi:10.1142/S1793042112500820}}.

\bibitem[Her73]{Hermite1873}
Charles Hermite.
\newblock {Sur la fonctionne exponentielle}.
\newblock {\em Comptes Rendu de l'Acad\'{e}mie des Siences}, 77:18--24, 1873.

\bibitem[Lau89]{Laurent1989}
Michel Laurent.
\newblock {\'{E}quations exponentielles-polyn\^{o}mes et suites r\'{e}currentes
  lin\'{e}aires, II}.
\newblock {\em Journal of Number Theory}, 31(1):24--53, January 1989.
\newblock \href {http://dx.doi.org/10.1016/0022-314X(89)90050-4}
  {\path{doi:10.1016/0022-314X(89)90050-4}}.

\bibitem[Mar06]{Marker2006}
David Marker.
\newblock {A remark on Zilber's pseudoexponentiation}.
\newblock {\em Journal of Symbolic Logic}, 71(3):791--798, 2006.
\newblock \href {http://dx.doi.org/10.2178/jsl/1154698577}
  {\path{doi:10.2178/jsl/1154698577}}.

\bibitem[vL82]{VonLindemann1882}
Ferdinand von Lindemann.
\newblock {\"{U}ber die Ludolph'sche Zahl}.
\newblock {\em Sitzungsberichte der K\"{o}niglich Preussischen Akademie der
  Wissenschaften zu Berlin}, 2:679--682, 1882.

\bibitem[Wei85]{Weierstrass1885}
Karl Weierstrass.
\newblock {Zu Hrn. Lindemann's Abhandlung ``\"{U}ber die Ludolph'sche Zahl''}.
\newblock {\em Sitzungsberichte der K\"{o}niglich Preussischen Akademie der
  Wissenschaften zu Berlin}, 2:1067--1086, 1885.

\bibitem[Zan04]{Zannier2004}
Umberto Zannier.
\newblock {On the integer solutions of exponential equations in function
  fields}.
\newblock {\em Annales de l'institut Fourier}, 2004.

\bibitem[Zil05]{Zilber2005}
Boris Zilber.
\newblock {Pseudo-exponentiation on algebraically closed fields of
  characteristic zero}.
\newblock {\em Annals of Pure and Applied Logic}, 132(1):67--95, 2005.
\newblock \href {http://dx.doi.org/10.1016/j.apal.2004.07.001}
  {\path{doi:10.1016/j.apal.2004.07.001}}.

\end{thebibliography}
\end{document}